\setlist[enumerate,2]{label={(\theenumi.\theenumii)},ref={(\theenumi.\theenumii)}}
\tikzset{>=latex'}
\definecolor{ColOrange}{HTML}{E27D60}
\definecolor{ColBlue}{HTML}{85CDCA}
\definecolor{ColYellow}{HTML}{E8A87C}
\definecolor{ColPink}{HTML}{C38D9D}
\definecolor{ColGreen}{HTML}{40B3A2}
 \def\author@andify{
 \nxandlist {\unskip{} $\cdot$ \penalty-2}
 {\unskip {} $\cdot$ \penalty-2}
 {\unskip {} $\cdot$ \penalty-2}}
\newcommand{\orcid}[1]{\unskip {} \raisebox{-.3ex}{\href{https://orcid.org/#1}{\includegraphics{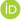}}}}
\addspace\usebibmacro{urldate}}{}}
\crefname{enumi}{}{}
\crefname{enumii}{}{}
\def\csname ver@etex.sty\endcsname{3000/12/31}
\definecolor{citegreen}{rgb}{0,0.3,0}
\definecolor{refred}{rgb}{0.5,0,0}
\def\mathring#1{\accentset{\circ}{#1}}
\let\oldemail\email
\let\email\relax
\def\email#1{\oldemail{\href{mailto:#1}{\textcolor{black}{#1}}}}
\title[Isoperimetric sets and mass]{Isoperimetric sets in nonnegative scalar curvature and their role through various concepts of mass}
\author[L.~Benatti]{Luca Benatti\orcid{0000-0002-4685-7443}}
\address{L.~Benatti, Universit\`a degli Studi di Pisa,
Largo Bruno Pontecorvo 5, 56127 Pisa, Italy}
\email{luca.benatti@dm.unipi.it}
\author[M.~Fogagnolo]{Mattia Fogagnolo\orcid{0000-0002-5933-1344}}
\address{M.~Fogagnolo, Universit\`a di Padova, via Trieste 63, 35121 Padova (PD), Italy}
\email{mattia.fogagnolo@unipd.it}
\renewcommand{\ncapa}{\mathfrak{c}}
\newcommand{\eum}{\delta}
\newcommand{\ma}{\mathfrak{m}}
\newcommand{\tp}[1][p]{{\text{\tiny$(#1)$}}}
\begin{document}
 
\begin{abstract}
We review some recent results about the relations among isoperimetric sets, Penrose inequalities and related concepts in the analysis of $3$-manifolds of nonnegative scalar curvature. We also show that if the isoperimetric sets of big volume have connected boundaries, the equivalence among suitable notions of mass hold.  
\end{abstract}
\maketitle

\noindent MSC (2020): 
53C21, 
53E10, 
83C99, 
49J45,
35B40, 
49J40. 

\medskip

\noindent \underline{\smash{Keywords}}: isoperimetric sets, Penrose inequality, positive mass theorem, isoperimetric mass, isocapacitary mass, inverse mean curvature flow, nonnegative scalar curvature.
\section{Introduction}
{We will discuss topics gravitating around isoperimetric properties of Riemannian manifolds of dimension $3$ with nonnegative scalar curvature that are asymptotically flat in some suitable sense, usually endowed with a closed, minimal and outermost boundary. With the latter adjective, we indicate that no other closed, minimal surface exists enclosing $\partial M$. We will occasionally refer to boundaries with these properties with the word \emph{horizon}, or \emph{horizon boundary}.

One of the classical results in this class of manifolds is the Riemannian Penrose inequality.  Leaving all the discussion and the details to the main body of the work, we just point out that Penrose inequalities read as a bound from above of the area of the minimal boundary of $(M, g)$ in terms of suitable global geometric invariants, which are interpreted as physical ``global" \emph{masses}. In \cite{benatti_isoperimetricriemannianpenroseinequality_2022}, together with Mazzieri, we showed an isoperimetric version of the Riemannian Penrose inequality holding in a very large class of manifolds. We will review such  result focusing on its relation with the isoperimetric sets and on the techniques that led to the proof of their existence for any volume in this context \cite{carlotto_effectiveversionspositivemass_2016}. We will also deal with the equivalence of various, apparently very different, notions of mass. Particular attention will be put in being as sharp as possible in the decay requirements for the asymptotically flat condition on the manifolds considered.}

\smallskip

In \cref{sec:inverse}, we review the main properties of the Inverse Mean Curvature Flow we are going to employ, mainly obtained by Huisken-Ilmanen \cite{huisken_inversemeancurvatureflow_2001}. In \cref{sec:isoperimetry}, we review and detail the beautiful proof of the existence of isoperimetric sets of any volume in $3$-manifolds with nonnegative scalar curvature, obtained by Carlotto-Chodosch-Eichmair \cite{carlotto_effectiveversionspositivemass_2016}, after the fundamental insight of Shi \cite{shi_isoperimetricinequalityasymptoticallyflat_2016}. In doing so, we sensibly weaken the decay assumptions on the metric to $\CS^0$-asymptotic flatness. In \cref{sec:penrose}, after having discussed Huisken's notion of isoperimetric mass \cite{huisken_isoperimetricconceptmassgeneral_2006}, we review the proof of the related isoperimetric Penrose inequality, obtained in collaboration with Mazzieri \cite{benatti_isoperimetricriemannianpenroseinequality_2022}. In \cref{sec:admandco}, we analyze the relations with other notions of mass, most notably the classical $\ADM$ mass \cite{arnowitt_coordinateinvarianceenergyexpressions_1961}. This also gives us the occasion for discussing the physical relevance of these concepts. The other notions of mass that will be taken into account are the \emph{isocapacitary masses} \cite{jauregui_admmasscapacityvolumedeficit_2020, benatti_nonlinearisocapacitaryconceptsmass_2023}. Moreover, we show some (partly) new results about the relations among the connectedness of the isoperimetric sets of large volume and the equivalence of such notions of masses. We conclude this note with \cref{sec:questions}, where we present various possible directions of research. 

\subsection*{Acknowledgements.}
Part of this work has been carried out during the authors' attendance to the \emph{Thematic Program on Nonsmooth Riemannian and Lorentzian Geometry} that took place at the Fields Institute in Toronto. The authors warmly thank the staff, the organizers and the colleagues for the wonderful atmosphere and the excellent working conditions set up there. 
Luca Benatti is supported by the European Research Council’s (ERC) project n.853404 ERC VaReg -- \textit{Variational approach to the regularity of the free boundaries}, financed by the program Horizon 2020, by PRA\_2022\_11 and by PRA\_2022\_14. 
Mattia Fogagnolo has been supported by the European Union – NextGenerationEU and by the University of Padova under the 2021 STARS Grants@Unipd programme ``QuASAR". 
The authors are members of Gruppo Nazionale per l’Analisi Matematica, la Probabilit\`a e le loro Applicazioni (GNAMPA), which is part of the Istituto
Nazionale di Alta Matematica (INdAM), and are partially funded by the GNAMPA project ``Problemi al bordo e applicazioni geometriche".

The authors are grateful to G. Antonelli, S. Hirsch, L. Mazzieri, F. Oronzio and M. Pozzetta for countless, precious and pleasureful discussions on topics related to this work. Moreover, M. F. thanks A. Pluda, V. Franceschi and G. Saracco for having organized, with the support of INdAM, the wonderful workshop ``Anisotropic Isoperimetric Problems \& Related Topics”  in Rome.

The authors warmly Carlo Mantegazza for his thorough reading of the manuscript and for the many precious suggestions.
\section{The Inverse Mean Curvature Flow and the Hawking mass}
\label{sec:inverse}
A very powerful tool to understand some intimate geometric properties of noncompact $3$-manifolds with nonnegative scalar curvature is the flow of surfaces through the inverse of their mean curvature. We are focusing, for all the paper, on ambient manifolds with one single end. 
The Inverse Mean Curvature Flow (IMCF) of an immersion with strictly positive mean curvature $F_0 : \Sigma \hookrightarrow M$ is defined as 
\begin{equation}
\label{eq:smoothimcf}
    \frac{\partial}{\partial t} F_t = \frac{1}{\H_{t}} \nu_t,
\end{equation}
where $\nu_t$ is the normal pointing towards infinity (meaning towards the interior of the end, see \cref{fig:manifold}) and $\H_t$ the mean curvature, that is the sum of the principal curvatures. 
It is clear that, at least at those points where the mean curvature tends to zero, such flow must develop singularities\footnote{Singularities actually can happen in this case only \cite[Corollary 2.3]{huisken_higherregularityinversemean_2008}}. 

\begin{center}
\begin{figure}[ht]
\begin{tikzpicture}[x=1pt,y=1pt,scale=2.8]
\node[inner sep=0pt, line width=0pt, anchor=south west,scale=2.8,opacity=.9] at(-3.242,0){\includegraphics[width=100.845pt]{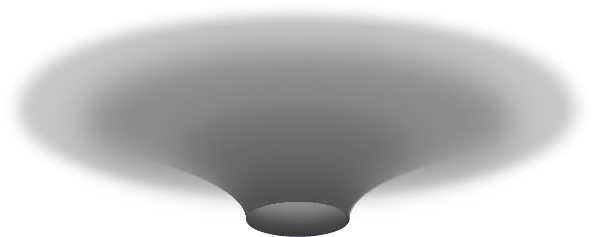}};
\draw[dashed] svg{M 36.05899,6.9381194 A 44.190375,15.027867 0 0 0 2.9467903,21.486099 44.190375,15.027867 0 0 0 47.137163,36.513967 44.190375,15.027867 0 0 0 91.327539,21.486099 v 0 A 44.190375,15.027867 0 0 0 58.215007,6.9380904};
\draw[dashed] svg{m 28.058864,11.119728 c -3.943844,1.442415 -8.439174,2.612121 -12.383499,3.4914};
\draw[dashed] svg{M 78.843409,14.654501 C 74.881968,13.777077 70.34095,12.603794 66.352884,11.152633};
\draw[dashed] svg{m 28.058864,11.119728 c -3.943844,1.442415 -8.439174,2.612121 -12.383499,3.4914};
\draw[dashed] svg {M 58.172965,6.9378924 A 43.849648,14.911995 0 0 0 47.180537,6.4617324 43.849648,14.911995 0 0 0 36.187781,6.9379214};
\draw svg{M 66.352884,11.152633 C 62.979478,9.9251344 60.001345,8.4989904 58.107911,6.8829924 m -21.917078,0.01911 c -1.877128,1.593502 -4.808986,3.002281 -8.131969,4.2176236};
\draw[dashed,line width=1pt] svg{m 31.863218,9.5477154 c 1.760601,-0.5415199 3.521089,-1.0830047 6.20281,-0.6917725 2.681721,0.3912322 6.284637,1.7151091 9.1725,1.6795001 2.887863,-0.03561 5.060366,-1.4305935 7.451997,-1.8401925 2.391631,-0.409599 5.002187,0.1661688 7.612745,0.7419369};
\draw[line width=1pt] svg{m 31.754003,9.5973034 c 3.880811,-1.820702 6.605884,-3.978031 6.629303,-6.449485 0.02341,1.648465 3.952976,2.984797 8.800337,2.984797 4.84736,0 8.776921,-1.336332 8.77695,-2.984797 2.9e-5,2.499519 2.787252,4.677677 6.738531,6.511305 1.689474,0.9419076 2.983032,1.6618366 1.517517,1.9472916 -1.465514,0.285453 -6.085777,-0.08552 -9.163318,0.529931 -3.07754,0.615447 -4.612037,2.217396 -8.608663,2.187938 -3.996626,-0.02946 -10.454888,-1.690399 -13.299321,-2.759177 -2.844431,-1.068782 -2.160963,-1.491431 -1.391336,-1.9678036 z};
\draw[line width=1pt] (47.18364,3.147841) ellipse (8.773778pt and 2.9837084pt);
\draw[->](58.296-3.242,11.944) --++(3,4) node[pos=.8, anchor= west]{$\nu$};
\node[anchor=north west] at (50.426-3.242,0) {$\partial M$};
\node at (85,37){$(M,g)$};
\draw[->] (35,5) to[out=45, in=180] (43,8);
\node [inner sep=1pt,anchor=north east] at (35,5){$\Omega$};
\node [inner sep=1pt,anchor=north east] at (46,19){$\partial \Omega$};
\end{tikzpicture}
\caption{}
\label{fig:manifold}
\end{figure}
\end{center}

To circumvent this fatal issue, Huisken-Ilmanen developed a weak notion of Inverse Mean Curvature Flow starting, for a bounded $\Omega$, at $\Sigma = \partial \Omega$, consisting in a proper function $w\in \Lip_{\loc}(M\smallsetminus \Int \Omega)$ weakly satisfying the boundary value problem 
\begin{equation}\label{eq:IMCF}
 \begin{cases}
 \div\left( \dfrac{ \D w}{\abs{\D w}}\right) &=& \abs{ \D w} & \text{on $M\smallsetminus \Omega$,}\\
 w &=& 0 & \text{on $\partial \Omega$,}\\
 w &\to& +\infty & \text{as $\dist(x,o) \to +\infty$},
 \end{cases}
\end{equation}
where $o \in M$ is any fixed point.
Observe that, if $w$ happens to be of class $\CS^2$ and free of critical points, the PDE in the above problem exactly states the velocity of the level sets $\{w= t\}$, that is $\abs{\D w}^{-1}$, coincides with the inverse of the mean curvature $\div ({ \D w}{\abs{\D w}{^{-1}}})$, hence one can just let $\Sigma_t = \{w =t\}$. On the other hand, Huisken-Ilmanen's definition makes sense also for $w$ that is only Lipschitz; in fact, it is defined as follows, for any $\Omega$ bounded closed set. For every $v \in \mathrm{Lip}_{\mathrm{loc}}(M)$ with $\{w \neq v\} \Subset M \setminus {\Omega}$ and any compact set $K\subset M \setminus \mathrm{Int}(\Omega)$ containing $\{w \neq v\}$,
\begin{equation}
\label{wimcf1}
J_w^K(w) \leq J_w^K(v)
\end{equation}
 where 
\begin{equation}
\label{J}
J_w^K(v) = \int_K \abs{\D v} + v \abs{\D w} \dd\mu. 
\end{equation}
The ``attainment" of the initial surface is codified by prescribing $w$ to be zero on $\Omega$. Observe that our choice of $\Omega$ to be closed allows to consider $\Omega = \partial M$.


Leaving the rigorous description of the properties of the weak IMCF to the original source \cite{huisken_inversemeancurvatureflow_2001}, we just heuristically illustrate how solutions behave. We shall distinguish between \emph{fat} level sets and non-fat ones. The latter consist in level sets $\set{w = t}$ that have not developed an interior of positive measure, and that in particular satisfy $\abs{\set{w = t}} = 0$, for the volume measure of $(M, g)$. As long as the level sets do not fatten, the weak flow consists in a foliation of $\CS^{1, \alpha}$-hypersurfaces moving { by the inverse of a suitable $L^2$-weak version of mean curvature}. In this case, in particular, the quantities $\abs{\partial \Omega_t}$, $\abs{\Omega_t}$ and  $\int_{\partial \Omega_t} \H^2$, are continuous, where we set $\Omega_t = \set{w \leq t}$. By contrast, if $\abs{\{w = \overline{t}\}} > 0$, we say that the level set is fat, and the time $\overline{t}$ will be referred to as a \emph{jump time}, as it can be described with a jump from $\set{w < \overline{t}}$ to its \emph{strictly outward minimizing hull}, which is defined as the set $E_t \supset \set{w < \overline{t}}$ of maximal volume among those minimizing the perimeter from the outside. In fact, it turns out that the closure of $E_t$ coincides with $\set{w \leq \overline{t}}$. It is not difficult to guess that jumps happen exactly when some $E_t$ is strictly enclosing $\{w < t\}$  with same perimeter. In particular, jump times are exactly those $t$'s such that $\abs{E_t} - \abs{\set{w < t}} > 0$ but $\abs{\partial E_t} = \abs{\partial \set{w < t}}$.

Besides the  work by Huisken-Ilmanen, we address the reader to
\cite{fogagnolo_minimisinghullscapacityisoperimetric_2022}, where a precise study of minimizing hulls (also in connection with IMCF) is performed taking advantage of the earlier \cite{bassanezi_subsolutionsleastareaproblem_1984}. Moreover, enlightening animations of the weak IMCF can be enjoyed at \cite{pasch_inversemeancurvatureflow_}.

\smallskip

A first decisive result of Huisken-Ilmanen's work \cite{huisken_inversemeancurvatureflow_2001} is a proof of the existence of a proper, unique, global weak flow anytime there exists a global subsolution. 
We are not describing this result in its full generality, but we just point out its application to $\CS^0$-asymptotically flat Riemannian $3$-manifold, together with some basic properties we are going to explicitly need. Before stating it, we give the precise definition of $\CS^k_\tau$-asymptotic flatness.
\begin{definition}[Asymptotically flat Riemannian manifolds]
\label{def:asyflat}
A Riemannian $3$-manifold $(M,g)$ with (possibly empty) boundary is said to be \emph{$\CS^{k}_\tau$-asymptotically flat}, with $k\in \N$ and $\tau > 0$ ($\tau = 0$ resp.),  if the following conditions are satisfied.
\begin{enumerate}
    \item There exists a compact set $K \subset M$ such that $M \smallsetminus K$ is diffeomorphic to $\R^3\smallsetminus \set{\abs{x}\leq R}$, through a map $(x^1,x^2,x^3)$ whose component are called \emph{asymptotically flat coordinates}.
    \item In the chart $(M \smallsetminus K, (x^1,x^2,x^3))$ the metric tensor is expressed as
    \begin{equation}
        g= g_{ij} \dd x^i \otimes\dd x^j= (\eum_{ij}+\eta_{ij}) \dd x^i \otimes\dd x^j
    \end{equation}
    with 
    \begin{align}
      \qquad \sum_{i,j=1}^{3}\sum_{\abs{\beta}=0}^k \abs{x}^{\abs{\beta}+\tau} \abs{ \partial_\beta \eta_{ij}} =O(1) \text{ ($=o(1)$ resp.)} && \text{as } \abs{x} \to +\infty.
    \end{align}
\end{enumerate}
We will denote the $\CS^k_0$-asymptotically flat condition simply with $\CS^k$-asymptotically flat. 
\end{definition}
Observe in particular that $\CS^0$-asymptotic flatness merely amounts to $\abs{\eta} \to 0$ as $\abs{x} \to +\infty$.

\smallskip

The following statement substantially gathers \cite[Proposition 3.2]{benatti_isoperimetricriemannianpenroseinequality_2022} and  \cite[Connectedness Lemma 4.2]{huisken_inversemeancurvatureflow_2001}, see also \cite[pp.9-10]{agostiniani_riemannianpenroseinequalitynonlinear_2022} and \cite[Lemma 2.1]{chan_monotonicitygreenfunctions_2022} for the connectedness part. 
\begin{theorem}[Existence and basic properties of the weak IMCF]
\label{thm:properties-imcf}
Let $(M, g)$ be a Riemannian $3$-manifold possibly with boundary. Suppose that $(M,g)$ is $\CS^0$-asymptotically flat. Then, for any closed $\Omega \supset \partial M$ with $\partial \Omega$ smooth, there exists a weak solution $w$ to  problem \cref{eq:IMCF}. 
If $\partial \Omega$ is connected, and $H_2(M, \partial M, \Z) =\{0\}$, then $\partial\{w \leq t\}$ is connected for any $t \in [0, +\infty)$.  
\end{theorem}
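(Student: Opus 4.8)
The plan is to prove the two assertions by separate mechanisms: the existence of $w$ I would deduce from Huisken--Ilmanen's characterization of global solvability of \cref{eq:IMCF} through the existence of a proper subsolution, while the connectedness of $\partial\{w\leq t\}$ I would obtain from the minimizing-hull structure of the weak flow combined with the homological hypothesis. For existence, the key principle of \cite{huisken_inversemeancurvatureflow_2001} is that a proper global weak solution to \cref{eq:IMCF} exists as soon as one exhibits a proper weak \emph{subsolution} whose sublevel sets are precompact and exhaust $M\smallsetminus\Int\Omega$; the subsolution plays the role of a barrier that forces properness and prevents the regularized flow from stalling. The whole weight of the argument is therefore shifted onto the construction of such a subsolution adapted to the $\CS^0$-Asymptotically Flat structure.

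To build the subsolution I would start from the model situation on Euclidean $\R^3$, where $2\log\abs{x}$ solves the flow exactly: its level sets are the coordinate spheres $\{\abs{x}=r\}$, for which $\div(\D w/\abs{\D w})=2/\abs{x}=\abs{\D w}$. On the asymptotically flat end I would take as competitor the function whose level sets are large coordinate spheres, reparametrized in the radial variable so as to retain the subsolution inequality, and glue it to an arbitrary proper exhaustion of the compact core $K$ of \cref{def:asyflat}. Properness is then immediate, since the level sets are coordinate spheres diverging to infinity. The main obstacle here is precisely the weakness of the $\CS^0$ decay: with $k=0$ and $\tau=0$ one controls $\eta_{ij}=o(1)$ but has no pointwise control on the derivatives $\partial_\beta\eta_{ij}$, so the mean curvature of a coordinate sphere cannot be estimated directly and the pointwise subsolution inequality is out of reach. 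Following \cite[Proposition 3.2]{benatti_isoperimetricriemannianpenroseinequality_2022}, the remedy is to argue entirely within the variational formulation of subsolution of \cite{huisken_inversemeancurvatureflow_2001}, which compares only the perimeter and the enclosed volume of sublevel sets. These quantities depend on the metric solely through the area element and through $\sqrt{\det g}$, both of which are $(1+o(1))$-close to their Euclidean counterparts under a $\CS^0$ perturbation; hence the dilated Euclidean subsolution remains a subsolution in the weak sense for all sufficiently large radii, and the existence theorem applies.

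For the connectedness statement I would argue that connectedness of $\partial\{w\leq t\}$ propagates forward from $t=0$. Between jump times the weak flow is a smooth $\CS^{1,\alpha}$ foliation evolving by the inverse of the (weak) mean curvature, so a connected leaf cannot break apart: a new boundary component could only be created through a singularity, and singular times are exactly the jump times. At a jump time $\bar t$ the set passes to its strictly outward minimizing hull $E_{\bar t}=\set{w\leq\bar t}$, and this is where the Connectedness Lemma of \cite[Connectedness Lemma 4.2]{huisken_inversemeancurvatureflow_2001} enters. Suppose $\partial E_{\bar t}$ had a component $\Sigma'$ disjoint from the one descending from $\partial\Omega$. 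Being part of a strictly outward minimizing hull, $\Sigma'$ minimizes area from the outside and is thus a stable minimal surface enclosing a region $U$ that does not meet $\Omega$. Since $\partial E_{\bar t}$ is homologous to $\partial\Omega$ (the two cobound $\set{0\leq w\leq\bar t}$) and $H_2(M,\partial M,\Z)=\{0\}$, the class $[\Sigma']$ must vanish, so $\Sigma'$ bounds; a first-variation and strong-maximum-principle comparison against the connected surface descending from $\partial\Omega$, together with the outermost character of the horizon, then excludes such an extra closed minimal component. Hence $\partial E_{\bar t}$ stays connected across the jump, and the induction on $t$ closes.

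The step I expect to be genuinely delicate is the subsolution construction under the bare $\CS^0$ decay, since it is exactly the place where the naive pointwise computation fails and one must replace it by the metric-stable variational comparison. The connectedness part, by contrast, is essentially topological once the minimizing-hull description of jumps is in hand, and the homological hypothesis $H_2(M,\partial M,\Z)=\{0\}$ is precisely what is needed to rule out homologically nontrivial extra components.
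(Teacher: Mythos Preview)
The paper does not actually prove this theorem: it is stated as a compilation of results from the literature, with the existence part attributed to \cite[Proposition 3.2]{benatti_isoperimetricriemannianpenroseinequality_2022} (building on Huisken--Ilmanen's subsolution criterion) and the connectedness part to \cite[Connectedness Lemma 4.2]{huisken_inversemeancurvatureflow_2001}, with alternative references to \cite{agostiniani_riemannianpenroseinequalitynonlinear_2022} and \cite{chan_monotonicitygreenfunctions_2022}. Your proposal correctly identifies these same sources, and your account of the existence part---variational subsolution in place of a pointwise one, so that only area and volume elements (both $\CS^0$-stable) enter---matches the strategy of the cited proposition.

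Your sketch of the connectedness argument, however, has a genuine gap. You conclude by invoking ``the outermost character of the horizon'' to rule out the extra minimal component $\Sigma'$, but the theorem as stated makes \emph{no} assumption that $\partial M$ is minimal or outermost; it only assumes $\CS^0$-Asymptotic Flatness and the homological condition. So that step is not available. Moreover, the mechanism you describe (extra component is minimal, bounds by $H_2(M,\partial M,\Z)=\{0\}$, then maximum principle against the main component) is not how the Connectedness Lemma in \cite{huisken_inversemeancurvatureflow_2001} actually proceeds. The argument there is variational: one exploits the minimizing-hull characterization of $E_t$ directly, and the homological hypothesis is used to guarantee that a putative extra boundary component can be reached from the main one through the exterior region, which then yields a competitor violating the outward-minimizing property. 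No appeal to an outermost horizon is needed, which is why the statement can be formulated without that hypothesis.
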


\subsection{The monotonicity of the Hawking mass}
\label{subsec:hawking}
Most of the results to be discussed in this paper are ultimately consequences of the monotonicity of the \emph{Hawking mass},  
\begin{equation}
\label{eq:hawkingmass}
\ma_{H}(\partial \Omega)= \frac{ \abs{ \partial \Omega}^{\frac{1}{2}}}{16 \pi^{\frac{3}{2}}} \left( 4 \pi - \int_{\partial \Omega}\frac{\H^2}{4} \dif \sigma \right)
\end{equation}
along $\Omega_t = \{w \leq t\}$. Such quantity has been substantially conceived in \cite{hawking_gravitationalradiationexpandinguniverse_1968}, while Geroch \cite{geroch_energyextraction_1973} showed it to be nondecreasing along any smooth IMCF of a connected surface in a $3$-manifold with nonnegative scalar curvature and devised it as a tool to provide the Positive Mass Theorem. 

Such computation is straightforward. It  
relies on classical evolution equations and on the Gauss-Bonnet Theorem, which is the reason why connectedness is needed along the evolution. Denote $\Sigma_t= F_t(\Sigma)$ and let $\dd \sigma_t$ be the area measure on $\Sigma_t$. Then, employing well-known evolution equations (see e.g. \cite[Theorem 3.2]{huisken_geometricevolutionequationshypersurfaces_1999}), we have 
\begin{equation}\label{eq:exponential_growth}
\frac{\dd}{\dd t}\abs{ \Sigma_t}= \int_{\Sigma_t} \frac{\partial}{\partial t}(\dd \sigma_t) =\int_\Sigma \dif\sigma_t = \abs{\Sigma_t},
\end{equation}
immediately implying $\abs{ F_t(\Sigma)}= \ee^t \abs{ \Sigma}$. Hence,  we get
\begin{align}
    (16 \pi) ^{\frac{3}{2}} \frac{\dd}{\dd t } \ma_H(\Sigma_t) &=\abs{ \Sigma_t}^{\frac{1}{2}}\left(8\pi-\int_{\Sigma_t}\frac{\H^2_t}{2} \dif \sigma_t -\frac{\dd}{\dd t } \int_{\Sigma_t}\H^2_t\dif \sigma_t\right)\\
    &=\abs{\Sigma_t}^{\frac{1}{2}}\left(8\pi-\int_{\Sigma_t}\frac{3\H^2_t}{2} \dif \sigma_t - 2\int_{\Sigma_t}\H_t \frac{\partial H_t}{\partial t}\dif \sigma_t\right)\\
    &=\abs{\Sigma_t}^{\frac{1}{2}}\left(8\pi-\int_{\Sigma_t}\frac{3\H^2_t}{2} \dif \sigma_t + 2\int_{\Sigma_t}\H_t \left(\Delta_{\Sigma_t} \frac{1}{ \H_t} + \frac{ \abs{ \h_t}^2+ \Ric(\nu_t, \nu_t)}{\H_t} \right)\dif \sigma_t\right),\intertext{where $\h_t$ is the second fundamental form of $\Sigma_t$.  Integrating by parts and using the classical (traced) Gauss-Codazzi equations, we obtain}
     (16 \pi) ^{\frac{3}{2}} \frac{\dd}{\dd t } \ma_H(\Sigma_t)&=\abs{\Sigma_t}^{\frac{1}{2}}\left(8\pi-\int_{\Sigma_t}\frac{3\H^2_t}{2} \dif \sigma_t + \int_{\Sigma_t}2\frac{ \abs{ \nabla_{\Sigma_t} \H_t}^2}{\H_t^2}+\sca - \sca^{\Sigma_t} + \abs{\h_t}^2 +\H_t^2\dif \sigma_t\right)\\
    &=\abs{\Sigma_t}^{\frac{1}{2}}\left(8\pi-\int_{\Sigma_t}\sca^{\Sigma_t} \dif \sigma_t + \int_{\Sigma_t}2\frac{ \abs{ \nabla_{\Sigma_t} \H_t}^2}{\H_t^2}+\sca+ \abs{\mathring{\h}_t}^2\dif \sigma_t\right),
\end{align}
where $\mathring{\h}_t$ is the trace-less second fundamental form of $\Sigma_t$. As $\sca\geq0$ and
\begin{equation}
\label{eq:gauss-bonnet}
    \int_{\Sigma_t}\sca^{\Sigma_t} \dif\sigma_t = 4\pi \chi(\Sigma_t) \leq 8\pi
\end{equation}
by the
Gauss-Bonnet Theorem applied to the connected surface $\Sigma_t$,
we have that the time derivative of the Hawking mass is nonnegative.

The analysis of the monotonicity in the weak formulation constitutes a central part of Huisken-Ilmanen's work. However, once accepted the heuristic description of the level set flow given above, one realizes that, at jump times, part of the evolving boundary should be replaced by a piece of minimal surface, hence the $L^2$-norm of the mean curvature  
can only decrease, while the area is continuous. Thus, the Hawking mass is expected only to increase. This is indeed what happens.
\begin{theorem}[Geroch monotonicity along the weak IMCF {\cite{huisken_inversemeancurvatureflow_2001}}]
\label{thm:geroch}
Let $(M, g)$ be a $\CS^0$-asympto\-tically flat Riemannian $3$-manifold possibly with boundary, satisfying $H_2(M, \partial M, \Z) = \{0\}$. Then, the Hawking mass \cref{eq:hawkingmass} is well defined and monotone nondecreasing along any weak solution of \cref{eq:IMCF} for any bounded closed $\Omega \supset \partial M$ with connected, smooth boundary $\partial \Omega$, as long as the flow is contained in a region of nonnegative scalar curvature.
\end{theorem}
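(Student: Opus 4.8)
The plan is to reduce the weak statement to the smooth Geroch computation displayed above, run on the regular part of the flow, and to complement it with a separate analysis of the (at most countably many) jump times, in the spirit of Huisken--Ilmanen. By \cref{thm:properties-imcf} the weak solution $w$ of \cref{eq:IMCF} exists, is proper, and exhausts $M$, so the sublevels $\Omega_t=\set{w\le t}$ foliate $M\smallsetminus\Int\Omega$ by $\Sigma_t=\partial\Omega_t$; since $\partial\Omega$ is connected and $H_2(M,\partial M,\Z)=\{0\}$, the same statement forces every $\Sigma_t$ to be connected. This connectedness is exactly what will license the Gauss--Bonnet step \cref{eq:gauss-bonnet} on each leaf, while the outward-minimizing hypothesis on $\Omega$ guarantees that $\partial\Omega$ coincides with its own strictly outward minimizing hull, so the flow starts without an initial jump.

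First I would handle the regular intervals, the open full-measure set of times at which $\set{w=t}$ does not fatten. There the flow is a $\CS^{1,\alpha}$ foliation with $\abs{\Sigma_t}=\ee^{t}\abs{\partial\Omega}$, as in \cref{eq:exponential_growth}, and with an $L^2$ weak mean curvature $\H_t=\abs{\D w}$; in particular $\int_{\Sigma_t}\H_t^2<+\infty$, so $\ma_H(\Sigma_t)$ is genuinely well defined by \cref{eq:hawkingmass}. On such intervals the differential identity displayed above for $\ma_H$ carries over into its $L^2$ weak form, and coupling the pointwise bound $\sca\ge 0$---valid wherever the flow stays in the nonnegatively curved region---with \cref{eq:gauss-bonnet}, available thanks to the connectedness of $\Sigma_t$, fixes the sign of $\tfrac{\dd}{\dd t}\ma_H(\Sigma_t)$ and yields the asserted monotonicity leafwise.

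Next I would analyse a jump time $\overline t$. Here $\set{w<\overline t}$ is replaced by its strictly outward minimizing hull $E_{\overline t}=\set{w\le\overline t}$, whose boundary has, by the very definition of a jump, the same area as $\partial\set{w<\overline t}$ while detaching along a minimal surface. Since $\abs{\Sigma_t}$ is continuous across $\overline t$, only $\int\H^2$ responds to the replacement, so $\ma_H$ undergoes a one-sided jump whose sign is dictated by this replacement and agrees with the one found on the regular intervals. Patching the regular intervals together with the jumps then gives that $t\mapsto\ma_H(\Sigma_t)$ is globally monotone nonincreasing, as claimed; well-definedness under the mere $\CS^0$-Asymptotic Flatness of \cref{def:asyflat} is not an extra hypothesis, since \cref{thm:properties-imcf} already supplies existence, properness, and the $L^2$ control on $\H_t$ at this low decay.

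The main obstacle is making both the Geroch identity and the jump estimate rigorous at the low $\CS^{1,\alpha}$ regularity of the leaves, where the classical evolution equations invoked above do not apply directly. I would resolve this exactly as Huisken--Ilmanen do: approximate $w$ by solutions of the regularised elliptic equation, run the smooth monotonicity on the approximants, and pass to the limit using the continuity of the area together with the one-sided (semicontinuity) control on the Willmore energy $\int\H^2$. That same semicontinuity is what pins down the sign of the jump contribution, completing the passage from the smooth calculation to the weak monotonicity.
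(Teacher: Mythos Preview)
Your sketch follows precisely the strategy the paper outlines (and attributes to Huisken--Ilmanen): run the smooth Geroch computation on the regular part of the weak flow, handle jump times by observing that the area is continuous while $\int_{\Sigma_t}\H^2$ can only drop when part of the boundary is replaced by minimal surface, and make both steps rigorous via the elliptic regularisation and the lower semicontinuity of the Willmore energy. So the approach is the same as the paper's.

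One point to clean up: there is a sign slip in the theorem statement (it says ``nonincreasing'' but everywhere else the paper uses \cref{thm:geroch} as \emph{nondecreasing}, consistently with the computation showing $\tfrac{\dd}{\dd t}\ma_H\ge 0$ and with the jump analysis giving an \emph{upward} jump of $\ma_H$). Your argument in fact establishes nondecreasing monotonicity---the derivative is nonnegative on regular intervals and at a jump $\int\H^2$ decreases, hence $\ma_H$ increases---yet your concluding sentence echoes the typo and says ``globally monotone nonincreasing''. You should state the correct direction.
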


\section{Isoperimetry in nonnegative scalar curvature}
\label{sec:isoperimetry}

\subsection{Nonnegative scalar curvature and the reverse isoperimetric inequality}
To see clearly the role played by the monotonicity of the Hawking mass in isoperimetric issues, we focus on the case of manifolds without boundary. It is not difficult to conceive that, shrinking and shrinking an initial geodesic ball $B(r, o)$ of radius $r$ centered at a desired $o \in M$, one can build through a limiting procedure as $r \to 0^+$ a weak IMCF originating from $o$. This will in turn be described by a function $w \in \mathrm{Lip}_{\mathrm{loc}}(M \setminus \{o\})$ solving in the weak sense \cref{eq:IMCF}, with the boundary condition replaced by $w(x)\to -\infty$ as $\mathrm{d}(x, o) \to 0^+$. This can actually be obtained with a similar argument as in \cite[Proposition 7.2]{huisken_inversemeancurvatureflow_2001}. In fact, arguing as in \cite[Blowdown Lemma 7.1]{huisken_inversemeancurvatureflow_2001}, $w(x)$ can be shown to asymptotically behave as the Euclidean model $(n-1) \log(\dist(x, o))$ around $o$. In particular, one immediately observes that $
\ma_H(\partial \Omega_t) \to 0$ when $\Omega_t= \set{w\leq t}$ is approaching $o$, that is when $t \to -\infty$. Consequently, being $\ma_H$ nondecreasing along the flow by \cref{thm:geroch}, recalling its expression \cref{eq:hawkingmass} the level sets of $w$ are seen to satisfy a \emph{reverse Willmore inequality}, meaning
\begin{equation}
\label{eq:reverse-willmore}
    \int_{\partial \Omega_t} \H^2 \dif\sigma \leq 16 \pi. 
\end{equation}
This suggests that the value of the Euclidean isoperimetric quotient of such sets are smaller than the value of the isoperimetric quotient of a round ball in flat $\R^3$. This can be obtained, in the smooth case, as follows.

Let $v(t) = \abs{\Omega_t}$. Then, by the coarea formula and equation \cref{eq:IMCF} solved by $w$, one has 
\begin{equation}
\label{eq:v'}
    v'(t) = \int_{\partial \Omega_t} \frac{1}{\H}\dif\sigma. 
\end{equation}
On the other hand, by  H\"older's inequality,
\begin{equation}
\label{eq:holder}
    \abs{\partial \Omega_t}=\int_{\partial \Omega_t} \H^\alpha \H^{-\alpha} \dif\sigma \leq \left(\int_{\partial \Omega_t} \H^{\alpha p}\dif\sigma\right)^{\frac{1}{p}} \left(\int_{\partial \Omega_t} \H^{-\alpha\frac{p}{(p-1)}} \dif\sigma\right)^{\frac{p-1}{p}},
\end{equation}
hence, letting $\alpha = 2/3$ and $p = 3$, one finds out that
\begin{equation}
\label{eq:stimashi}
    \left(\int_{\partial \Omega_t}\frac{1}{\H} \dif\sigma\right)^{-1} \leq \frac{\left(\int_{\partial \Omega_t} \H^2 \dif\sigma\right)^{\frac{1}{2}}}{\abs{\partial \Omega_t}^{\frac{3}{2}}}.
\end{equation}
Now, the numerator at the right-hand side is estimated by inequality \cref{eq:reverse-willmore}, while, as far as the denominator is concerned, we recall that the evolution forces the area $\abs{\partial \Omega_t}$ to be  $\mathrm{e}^t$ times a constant. This is due to formula \cref{eq:exponential_growth} (see \cite[Lemma 1.6]{huisken_inversemeancurvatureflow_2001} for the computation in the setting of weak solutions). By the asymptotic behaviour at the ``pole" pointed out above, the constant is in fact $4\pi$, that is the area of the round unit $2$-sphere, so
\begin{equation}
\label{eq:evolutionarea-pole}
   \abs{\partial \Omega_t} = 4\pi \mathrm{e}^t.  
\end{equation}
Consequently, combining these pieces of information with inequality \cref{eq:stimashi} and equation \cref{eq:v'} we conclude
\begin{equation}
\label{eq:v'ancora}
    v'(t) \geq 2\pi \mathrm{e}^{\frac{3}{2}t}.
\end{equation}
Integrating from $t \to -\infty$ (which corresponds to the pole) where both the volume and the area of the level sets vanishes, and taking into account again equality \cref{eq:evolutionarea-pole}, we get
\begin{equation}
\label{eq:reverse-iso}
\frac{\abs{\partial \Omega_t}}{\abs{\Omega_t}^{\frac{2}{3}}} \leq (36\pi)^{\frac{1}{3}} = \frac{\abs{\partial B}_{\R^3}}{\abs{B}_{\R^3}^{\frac{2}{3}}},
\end{equation}
where in the right-hand side $B$ denotes the round ball in $\R^3$.
In other words, the level sets of $w$ satisfy a reverse Euclidean isoperimetric inequality, at least when the evolution is smooth. 

The following result of Shi \cite{shi_isoperimetricinequalityasymptoticallyflat_2016}, deduced in particular by \cite[(15)]{shi_isoperimetricinequalityasymptoticallyflat_2016}, constitutes the general statement. Its less transparent formulation is ultimately due to the fact that not any volume is covered along the weak evolution, where jumps are allowed (and actually occur). 
\begin{theorem}[Shi's reverse Isoperimetric inequality]
\label{thm:shi}
Let $(M, g)$ be a $\CS^0$-asymptotically flat Riemannian $3$-manifold, satisfying $H_2(M, \partial M, \Z) = \{0\}$, and let $o \in M$. For $o \in M$, let $w_o$ be the weak IMCF issuing from $o$, and let, for $v > 0$
\begin{equation}
\label{eq:t(v)}
t(v) = \inf\set{\tau \, \st \, \abs{\set{w_o \leq t}} \geq v}.
\end{equation}
Then, as long as $(\{w_o \leq t(v)\}, g)$ has nonnegative scalar curvature, we have
\begin{equation}
    \label{eq:shi}
    \abs{\partial \{w_o \leq t(v)\}} \leq (36\pi)^{\frac{1}{3}} v^{\frac{2}{3}}. 
\end{equation}
\end{theorem}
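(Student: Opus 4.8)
The plan is to run the computation sketched above for the smooth flow, this time keeping track of the jump times of the weak evolution. Throughout set $\Omega_t = \{w_o \le t\}$, $A(t) = \abs{\partial\Omega_t}$ and $V(t) = \abs{\Omega_t}$. Two facts about the weak flow issuing from the point $o$ enter as black boxes, both from \cite{huisken_inversemeancurvatureflow_2001}: near $o$ the flow $w_o$ is asymptotic to the Euclidean model $2\log\dist(\cdot,o)$ (the Blowdown Lemma, applied around the pole), so that $A(t)\to 0$, $V(t)\to 0$ and $\ma_H(\partial\Omega_t)\to 0$ as $t\to-\infty$; and the boundary area grows exponentially and is continuous across jumps, so that with the above pole normalization $A(t) = 4\pi\,\mathrm{e}^{t}$ for every $t\in\R$. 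The latter couples \cite[Lemma 1.6]{huisken_inversemeancurvatureflow_2001} with the defining relation $\abs{\partial E_t} = \abs{\partial\{w_o < t\}}$ of a jump time, which also gives $A(t) = \abs{\partial\{w_o < t\}}$.

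First I would extract from \cref{thm:geroch} the reverse Willmore inequality \cref{eq:reverse-willmore} along the whole flow up to time $t(v)$. For every $s < t(v)$ we have $\Omega_s\subseteq\{w_o<t(v)\}\subseteq\{w_o\le t(v)\}$, which has nonnegative scalar curvature by hypothesis, and the level sets of $w_o$ are connected by \cref{thm:properties-imcf} --- using $H_2(M,\partial M,\Z)=\{0\}$ and the connectedness of the small geodesic spheres about $o$ that approximate $\partial\Omega_t$ as $t\to-\infty$. Hence $t\mapsto\ma_H(\partial\Omega_t)$ is nondecreasing on $(-\infty,t(v))$, and since it vanishes in the limit at the pole we get $\ma_H(\partial\Omega_t)\ge 0$, that is $\int_{\partial\Omega_t}\H^2\dif\sigma \le 16\pi$, for a.e. $t<t(v)$ --- precisely at the regular (non-jump) times, where the weak mean curvature is the $L^2$ quantity appearing in the Hawking mass.

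Next comes the differential inequality for $V$. At regular times the coarea formula and \cref{eq:IMCF} yield $V'(t) = \int_{\partial\Omega_t}\H^{-1}\dif\sigma$; combining the H\"older inequality \cref{eq:holder} with $\alpha = 2/3$, $p = 3$ (which produces \cref{eq:stimashi}), the reverse Willmore bound, and $A(t) = 4\pi\,\mathrm{e}^t$, exactly as in the passage from \cref{eq:stimashi} to \cref{eq:v'ancora}, one gets $V'(t)\ge 2\pi\,\mathrm{e}^{\frac32 t}$ for a.e. $t<t(v)$. Since $V$ is nondecreasing, $V(t(v)^-) - V(a)\ge\int_a^{t(v)}V'(s)\dif s$ for every $a<t(v)$, so letting $a\to-\infty$ and using $V(a)\to 0$,
\[
\abs{\{w_o<t(v)\}} = V(t(v)^-) \ge \int_{-\infty}^{t(v)} 2\pi\,\mathrm{e}^{\frac32 s}\dif s = \frac{4\pi}{3}\,\mathrm{e}^{\frac32 t(v)}.
\]

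Finally, by the very definition \cref{eq:t(v)} of $t(v)$ one has $\abs{\{w_o\le s\}} < v$ for all $s<t(v)$, whence $\abs{\{w_o<t(v)\}} = \sup_{s<t(v)}\abs{\{w_o\le s\}}\le v$. Combining this with the previous display and recalling $A(t(v)) = 4\pi\,\mathrm{e}^{t(v)}$ yields
\[
\abs{\partial\{w_o\le t(v)\}} = 4\pi\,\mathrm{e}^{t(v)} \le 4\pi\Big(\tfrac{3v}{4\pi}\Big)^{\frac23} = (36\pi)^{\frac13}\,v^{\frac23},
\]
which is \cref{eq:shi}. The step that genuinely requires care --- and explains the opaque form of the statement --- is the treatment of jump times: that the reverse Willmore inequality is available only at regular times is harmless, as they have full measure, but at a jump time $t(v)$ the set $\{w_o\le t(v)\}$ may enclose much more volume than $v$, so one must integrate up to $t(v)$ against the \emph{left} limit $V(t(v)^-) = \abs{\{w_o<t(v)\}}$, which is the quantity actually bounded by $v$ while still carrying boundary area $4\pi\,\mathrm{e}^{t(v)}$ by continuity of the area across the jump. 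The exponential area growth and its pole normalization, used here as a black box, are themselves a nontrivial input from \cite{huisken_inversemeancurvatureflow_2001}.
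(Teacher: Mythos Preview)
Your proof is correct and follows precisely the route the paper sketches before the statement (the reverse Willmore bound from Geroch monotonicity at the pole, the H\"older step \cref{eq:stimashi}, the area law $A(t)=4\pi\mathrm{e}^t$, and integration of $V'(t)\ge 2\pi\mathrm{e}^{3t/2}$), with the added virtue of handling the jump times carefully --- exactly the point the paper flags as the reason for the ``less transparent formulation'' and then defers to \cite{shi_isoperimetricinequalityasymptoticallyflat_2016}. Your identification $\abs{\{w_o<t(v)\}}=V(t(v)^-)\le v$ together with the continuity of area across jumps is the correct way to close the gap between the smooth computation and the weak statement.
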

The scalar curvature influences the existence of isoperimetric sets substantially by means of the above result. 

We find worth comparing the isoperimetric property of sublevel sets of the weak IMCF in nonnegative scalar curvature with those of the geodesic balls in nonnegative Ricci curvature.

\subsection{Comparison with nonnegative Ricci curvature}
\label{subsec:ricci}
    When the nonnegative scalar curvature assumption is strengthened to nonnegative Ricci curvature, we have that geodesic balls  satisfy the very same reverse isoperimetric inequality \cref{eq:reverse-iso}. This in turn relies on the basic Laplacian comparison and Bishop-Gromov theorems. Such classical results respectively state that, in a suitable weak sense, the mean curvature of a geodesic ball of radius $r$ is bounded from above by $(n-1)/r$, while its area $\abs{\partial B(r)}$ is controlled from above by $\abs{\S^{n-1}} r^{n-1}$. 
    
    We focus on dimension $3$, in order to facilitate the comparison with \cref{eq:reverse-iso}, although the argument works unchanged also in higher dimension. 
 Fix a point $o$ in a manifold $(M, g)$ endowed with a metric of nonnegative Ricci curvature $g$. 
 Letting $r(V)$ be the radius of the ball $B(r(V))$ centered in $o$ of volume $V$, it can be easily computed, through classical variation formulas, that
 \begin{equation}
\label{eq:derivative r(V)}
     r'(V) = \frac{1}{\abs{\partial B(r(V))}} \geq \frac{1}{4\pi r(V)^2},
 \end{equation}
 where the inequality is the Bishop-Gromov control on $\abs{\partial B(r)}$.
 Integrating such inequality, one gets
 \begin{equation}
\label{eq:boundr(V)}
     r(V) \geq \left(\frac{3}{4\pi} V\right)^{\frac{1}{3}}.
 \end{equation}
 On the other hand, we have
 \begin{equation}
\label{eq:derivativeI(V}
    \frac{\dd}{\dd V}  \abs{\partial B(r(V))} = \frac{1}{\abs{\partial B(r(V))}} \int_{\partial B(r(V))} \H \dif\sigma  \leq \frac{2}{r(V)} \leq \frac{2}{V^{\frac{1}{3}}} \left(\frac{4\pi}{3}\right)^{\frac{1}{3}},
 \end{equation}
 where the first inequality is due to the Laplacian comparison and the second one to \cref{eq:boundr(V)}. Integrating this other differential inequality, we conclude
 \begin{equation}
 \label{eq:reverse-iso-ricci}
\abs{\partial B(r)} \leq (36\pi)^{\frac{1}{3}} \abs{B(r)}^{\frac{2}{3}}
\end{equation}
for any radius $r \geq 0$. Observe that \eqref{eq:reverse-iso-ricci} fully mirrors \cref{eq:reverse-iso}, but for a different exhaustion. It should be clear that no particular features of the dimension three have been exploited here, contrarily to the Gauss-Bonnet Theorem \cref{eq:gauss-bonnet} utilized to infer the monotonicity of the Hawking mass, that in turn led to \cref{eq:reverse-iso}. 

{The reverse isoperimetric inequality in nonnegative Ricci curvature was first pointed out by Morgan-Johnson \cite{morgan_sharpisoperimetrictheoremsriemannian_2000}, and it was first exploited to infer the existence of isoperimetric sets under suitable asymptotic assumptions by Mondino-Nardulli \cite{mondino_existenceisoperimetricregionsnoncompact_2016}.}


\subsection{Isoperimetric analysis on manifolds with nonnegative scalar curvature}
The following is a version of Nardulli's generalized compactness principle \cite{nardulli_generalizedexistenceisoperimetricregions_2014}, crafted for an asymptotically flat framework. It substantially implies that in an isoperimetric minimizing sequence the portion of volume that is escaping at infinity is in fact converging to a ball in $\R^3$. The sublevel sets of the IMCF satisfying, as previously explained, a reverse isoperimetric inequality, can thus be exploited to replace such runaway volume and provide an isoperimetric set. In the remainder of this work, given an ambient manifold $M$, we denote with $I$ its isoperimetric profile, defined as the function $I: [0, \abs{M}) \to [0, \infty)$ given
\begin{equation}
\label{eq:profile}
    I(V) = \inf\set{P(E) \, \st \, E \subset M, \, \abs{E} = V}.
\end{equation}
Isoperimetric sets $E_V$ of volume $V$ are exactly those satisfying $\abs{\partial E_V} = I(V)$.
\begin{theorem}[Asymptotic decomposition of Isoperimetric minimizing sequences]
\label{thm:nardulli}
Let $(M, g)$ be a smooth, $\CS^0$-asymptotically flat Riemannian manifold $(M, g)$ with Ricci curvature bounded from below,  and let $I$ be its isoperimetric profile. Then, for any $V > 0$, there exists a possibly empty, \emph{bounded} $E \subset M$ and a ball $B$ in $\R^n$ such that $V = \abs{E} + \abs{B}$ and
\begin{equation}
    \label{eq:nardulli-isoperimetric}
    I(V) = \abs{\partial E} + \abs{\partial B}.
\end{equation}
\end{theorem}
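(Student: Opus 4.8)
The plan is to follow the classical concentration--compactness scheme for isoperimetric minimizing sequences, in the form pioneered by Nardulli \cite{nardulli_generalizedexistenceisoperimetricregions_2014} and adapted to the asymptotically flat setting, e.g.\ in \cite{carlotto_effectiveversionspositivemass_2016}. First I would fix $V > 0$ and take a minimizing sequence $\{E_k\}$ of sets of finite perimeter with $\abs{E_k} = V$ and $\abs{\partial E_k} \to I(V)$; by standard replacement arguments (intersecting with large coordinate balls and adjusting the volume with a small ball far away, using the asymptotic flatness to control the perimeter cost) one may assume each $E_k$ has smooth boundary, is bounded, and in fact lies in a region where an isoperimetric-type density estimate holds uniformly. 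The compactness of sets of finite perimeter on the fixed compact pieces of $M$ gives, up to subsequence, a limit set $E$ with $\abs{E} \le V$ and $\abs{\partial E} \le \liminf \abs{\partial E_k}$ by lower semicontinuity; the content is to understand the \emph{defect} $V - \abs{E}$ and the defect in perimeter.

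Next I would invoke the structural dichotomy: the mass $V - \abs{E}$ that is not captured by $E$ must escape to infinity, and along the escaping part the ambient metric converges (in $\CS^0$, after translating the asymptotically flat chart) to the flat metric on $\R^3$. A Lévy-type concentration function argument, combined with the uniform density estimates, shows that the lost volume does not spread thinly but concentrates into finitely many ``bubbles'' that, in the limit, are isoperimetric regions of $\R^3$ with the flat metric --- hence balls. Because any finite disjoint union of Euclidean balls of total volume $v$ has perimeter strictly larger than a single ball of volume $v$, one bubble suffices: write $V - \abs{E} = \abs{B}$ for a single round ball $B \subset \R^3$, and the perimeter passes to the limit additively, giving $I(V) \ge \abs{\partial E} + \abs{\partial B}$. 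The reverse inequality $I(V) \le \abs{\partial E} + \abs{\partial B}$ is immediate by using $E$ together with a faraway ball as a competitor (again with asymptotic-flatness error terms that vanish as the ball recedes), and regularity theory upgrades $E$ to a set with smooth boundary away from a small singular set, so $\abs{\partial E}$ really is the area of $\partial E$.

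The main obstacle is controlling the behavior at infinity with only $\CS^0$-asymptotic flatness rather than the stronger decay usually assumed: one must ensure that the escaping volume genuinely converges to Euclidean balls and that no perimeter is lost in the necks joining $E$ to the bubbles or between bubbles. This is where the uniform isoperimetric density/monotonicity estimates and the $\CS^0$ closeness of the metric to the flat one (which still gives a Euclidean-type isoperimetric inequality with constants tending to the sharp one at infinity) are essential; the argument of \cite{carlotto_effectiveversionspositivemass_2016} provides exactly the needed control, and I would reproduce it verbatim with the weakened decay, checking that every estimate used only the $\CS^0$ bound. A secondary technical point is that $E$ must be shown bounded: this follows because the density estimates prevent $E$ from having a thin tail escaping to infinity --- any such tail would itself have to bubble off, contradicting that it was captured in the limit $E$.
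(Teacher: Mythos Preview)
The paper does not give its own proof of this theorem: it simply records that the statement is a special case of \cite[Theorem 1.1]{antonelli_isoperimetricproblemriemannianmanifolds_2021}, and then remarks that the number of escaped Euclidean pieces is at most one because a single ball beats any disjoint union of balls of the same total volume. Your proposal, by contrast, sketches the actual concentration--compactness argument behind that cited result (in the spirit of Nardulli and of \cite{carlotto_effectiveversionspositivemass_2016}), so in content you are not diverging from the paper so much as unpacking the black box it invokes.

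Your outline is sound and hits the right points: local compactness plus lower semicontinuity to produce $E$, density estimates to prevent thin spreading of the runaway mass, $\CS^0$-convergence of translated charts to $\R^3$ to identify the bubbles as Euclidean isoperimetric sets, and the single-ball reduction. Two small remarks. First, the Ricci lower bound in the hypotheses is exactly what guarantees the uniform density/non-collapsing estimates you need; you should make that dependence explicit rather than just saying ``density estimates hold''. Second, the reference \cite{carlotto_effectiveversionspositivemass_2016} works under stronger decay, so if you want to cover the $\CS^0$-Asymptotically Flat case as stated here, the cleaner route is precisely the one the paper takes: appeal to the general framework of \cite{antonelli_isoperimetricproblemriemannianmanifolds_2021}, which is built to handle this level of generality, rather than rechecking the older argument line by line.
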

The version above is actually a consequence of the way more general \cite[Theorem 1.1]{antonelli_isoperimetricproblemriemannianmanifolds_2021}.

Clearly, the number of the balls drifting away being at most one is due to the fact that a union of balls in $\R^3$ is manifestly isoperimetrically less convenient than a single one. 

\begin{remark}
Nardulli's earlier work has been recently vastly exploited and empowered in the context of possibly nonsmooth metric spaces with Ricci lower bounds (see \cite{antonelli_isoperimetricproblemriemannianmanifolds_2021, antonelli_existenceisoperimetricregionsmanifolds_2022, antonelli_sharpisoperimetriccomparisonnon_2022, antonelli_isoperimetricproblemdirectmethod_2022,  antonelli_isoperimetricproblemstructureinfinity_2023}). It led to far-reaching consequences regarding the existence of isoperimetric sets, isoperimetric inequalities and sharp properties of the isoperimetric profile in such setting. 
\end{remark}

We can now present the overall argument leading to the existence of isoperimetric sets. By \cref{thm:nardulli}, the only way a minimizing sequence for the isoperimetric problem at volume $V$ may fail to provide an isoperimetric set is by losing part of its volume at infinity, that, on the other hand, by the asymptotic flatness converges to a round ball $B$ in $\R^3$. However, the nonnegative scalar curvature, by \cref{thm:shi}, determines that a sublevel  set $F_B$ of a weak IMCF emanating from some point $o \in M$, enclosing the same volume as $B$, has no more perimeter than $B$. 
Thus, if such $B$ actually exists, than $E \cup F_B$ would provide an isoperimetric set of volume $V$ sitting in $M$. 

\begin{remark}
In the case of nonnegative Ricci curvature, a scheme like this has been worked out in \cite{mondino_existenceisoperimetricregionsnoncompact_2016}, taking advantage of  the isoperimetric properties of the geodesic balls described in \cref{subsec:ricci}. Variations on this argument have been proposed also in \cite{antonelli_isoperimetricproblemriemannianmanifolds_2021} and in \cite{antonelli_existenceisoperimetricregionsmanifolds_2022}.
\end{remark}

\smallskip

There is a last issue to be taken into account before safely running the above argument. As already pointed out, the weak IMCF can jump, and in particular there could be some value $V$ such that for no $t \in (-\infty, +\infty)$ the set $\{w \leq t\}$ encloses a volume exactly equal to $V$. This may cause trouble in case such volume is exactly the volume of the ball at infinity that we would like to replace. This problem is bypassed by the strict monotonicity of the isoperimetric profile, holding when $M$ is endowed with a minimal outermost boundary. 

\begin{proposition}
\label{prop:isomon}
Let $(M, g)$ be a $\CS^0$-asymptotically flat Riemannian $3$-manifold with Ricci curvature bounded from below and endowed with a closed, outermost minimal boundary.  Then, its isoperimetric profile $I$  is strictly increasing. 
\end{proposition}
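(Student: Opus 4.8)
The plan is to argue by contradiction, exploiting that any isoperimetric region realizing $I(V)$ must have mean curvature equal to a fixed constant $\lambda$ (the Lagrange multiplier), that $I$ is always \emph{non}-decreasing on an asymptotically flat manifold with a horizon boundary — since moving a little volume ``out to infinity'' along a large ball costs arbitrarily little perimeter, so $I(V_2)\le I(V_1)$ whenever $V_2\ge V_1$ is impossible to violate — wait, more carefully: the standard fact is that on such manifolds $I$ is non-decreasing, and the content here is the \emph{strict} inequality. So first I would recall, via \cref{thm:nardulli}, that for each $V$ there is a (possibly empty) bounded minimizer $E_V$ together with a ball $B_V\subset\R^3$ at infinity with $|E_V|+|B_V|=V$ and $I(V)=|\partial E_V|+|\partial B_V|$; and that $I$ is locally Lipschitz, hence differentiable a.e., with $I'(V)=\lambda(V)\ge 0$ at points of differentiability, where $\lambda(V)$ is the constant mean curvature of the boundary of the minimizer (interpreting the constant mean curvature of the ball $B_V$ and of $\partial E_V$ as the same value $\lambda(V)$, by the first variation).

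Next, suppose $I$ fails to be strictly increasing. Since $I$ is non-decreasing, this means $I$ is constant on some maximal interval $[V_0,V_1]$ with $V_0<V_1$; on the interior of this interval $I'=0$ a.e., so the constant mean curvature $\lambda(V)$ of the associated minimizers vanishes for a.e.\ $V\in(V_0,V_1)$. The ball-at-infinity component $B_V$ has mean curvature $2/r_{B_V}>0$, so it must be \emph{empty} for such $V$; hence the minimizer is a genuine bounded region $E_V\subset M$ whose boundary is a closed minimal surface homologous to $\partial M$ (up to the part lying on $\partial M$). I would then split into cases according to whether $\partial E_V$ touches $\partial M$ or not. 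If $\partial E_V$ is a closed minimal surface in the interior of $M$ enclosing $\partial M$, this directly contradicts the \emph{outermost} hypothesis on the horizon $\partial M$ (no other closed minimal surface encloses $\partial M$) — unless $\partial E_V=\partial M$ itself, which pins down a single value of $V$, namely $V=|\,\text{region between }\partial M\text{ and ``nothing''}|$... and here one has to be a little careful about what ``$E_V$ homologous to $\partial M$'' of minimal area means — I would use that $\partial M$ minimal and outermost forces $|\partial M|$ to be the unique area value achieved by a minimal competitor, so only one $V$ can have $\lambda(V)=0$, contradicting that a whole interval does. If instead $\partial E_V$ has a free-boundary part on $\partial M$, a similar outermost/strong-maximum-principle argument applies: a minimal surface meeting the minimal boundary $\partial M$ orthogonally, pushed by the maximum principle, would again produce a minimal surface enclosing $\partial M$, contradiction.

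The main obstacle I expect is the case analysis at the boundary: making rigorous that a constant-mean-curvature-zero isoperimetric boundary, possibly with a free-boundary portion on the horizon and possible low-dimensional singularities, genuinely contradicts outermost-minimality. The clean way is to invoke the strong maximum principle for (stationary, stable) minimal hypersurfaces together with the regularity theory for isoperimetric boundaries in dimension $3$ (smooth away from a locally finite singular set of codimension $\ge 7$, hence smooth here), and the fact — essentially Huisken--Ilmanen's strict-minimizing-hull discussion, or a direct first-variation comparison — that the horizon $\partial M$ is itself area-minimizing in its homology class and strictly outward minimizing, so nothing of equal or smaller area can sit outside it. Once that is in place, the interval of constancy collapses to at most a point, proving $I$ is strictly increasing. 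I would also remark that the hypothesis ``Ricci bounded below'' is used only to invoke \cref{thm:nardulli}, and ``$\CS^0$-asymptotic flatness'' only to guarantee $I$ is non-decreasing and that runaway mass is a round ball.
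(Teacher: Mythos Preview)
Your overall strategy differs from the paper's: you argue by contradiction, assuming $I$ is constant on an interval and producing generalized minimizers with vanishing mean curvature, while the paper works directly, showing that the lower left Dini derivative of $I$ at every $V$ is bounded below by the mean curvature $\H_E$ of the minimizer and then proving $\H_E>0$. Both routes hinge on the same core fact, $\H_E>0$, but establish it very differently.

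There is a genuine gap in your reduction. You assert that once the ball-at-infinity component is empty, the isoperimetric boundary $\partial E_V$ is ``a closed minimal surface homologous to $\partial M$'' and hence encloses the horizon. This is not justified: nothing in the isoperimetric problem forces $E_V$ to contain (a neighborhood of) $\partial M$. The set $E_V$ could perfectly well be a bounded region in $M\setminus\partial M$ disjoint from the horizon, with $\partial E_V$ a closed minimal surface that is null-homologous and does \emph{not} enclose $\partial M$. The outermost hypothesis, as defined in this paper (``no other closed minimal surface enclosing $\partial M$''), says nothing about such surfaces, so your contradiction disappears in this case; your case split (closed in the interior vs.\ free-boundary on $\partial M$) simply omits it, and the side remark ``$\partial E_V=\partial M$ pins down a single value of $V$'' does not make sense here since $\partial M$ bounds no finite-volume region in $M$. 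The paper's proof circumvents all of this: it runs mean-convex Mean Curvature Flow with surgery from a large coordinate sphere, which by the cited results sweeps down to $\partial M$; since $E_V$ is bounded, some leaf of this strictly mean-convex flow must touch $\partial E_V$ from the outside, and the maximum principle forces $\H_E>0$ at the contact point---regardless of whether $\partial E_V$ encloses the horizon or not.
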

\proof
We first of all recall that the isoperimetric profile is continuous in this case, see \cite{munozflores_localholdercontinuityisoperimetric_2019}.  Fix a volume $V$. Then, by \cref{thm:nardulli}, we know that
\begin{equation}
I (V) = \abs{\partial E} + \abs{\partial B}_{\R^3}
\end{equation}
for a possibly empty $E\subset M$ and for a Euclidean ball $B$ satisfying $V = \abs{E} + \abs{B}_{\R^3}$. Clearly, $E$ must be isoperimetric for its own volume, i.e. $\abs{\partial E} = I(\abs{E})$, and in particular it is smooth. Moreover, we recall that on a Riemannian manifold isoperimetric sets are bounded anytime an Euclidean-like isoperimetric inequality is available, at least for small volumes (see e.g. \cite[Theorem B.1]{antonelli_isoperimetricproblemriemannianmanifolds_2021}). In particular, this holds in our case: due to asymptotic flatness, an uniform Euclidean-like isoperimetric inequality can be checked to hold outside some compact set,  so \cite[Theorem 3.2]{pigola_connectivityinfinitymanifoldq_2014}  ensures its validity on the whole manifold with boundary. We can thus infer that $E$ is bounded.

We assume first that $E$ is nonempty.  We perform inward variations $E_t$ of $E$ obtained through a normal deformation with a velocity such that, at $t= 0$, is given by $\varphi \nu_{\partial E}$, for a nonpositive $\phi \in \CS^{\infty}_c (E \setminus \partial M)$, and where $\nu_{\partial E}$ points towards infinity. For a fixed $\varepsilon > 0$, we let $t(\varepsilon)$ be such that $\abs{E_{t(\varepsilon)}} = \abs{E} - \varepsilon$.  We can thus compute 
\begin{equation}
\label{eq:confriso}
\begin{split}
\liminf_{\varepsilon \to 0^+} \frac{I(V) - I(V - \varepsilon)}{\varepsilon} &\geq \liminf_{\varepsilon \to 0^+}\frac{\abs{\partial E}  - \abs{\partial E_{t(\varepsilon)}}}{\varepsilon}= t'(\varepsilon)_{|\varepsilon = 0} \int_{\partial E} \varphi \H_E \dif\sigma
\\ &=\frac{\int_{\partial E} \varphi \H_E \dif\sigma}{\int_{\partial E} \varphi \dif\sigma} = \H_E, 
\end{split}
\end{equation}
where $\H_E$ is the \emph{constant} mean curvature of $\partial E \setminus \partial M$.  In the first inequality in \cref{eq:confriso}, we used 
\begin{equation}
I(V-\varepsilon) \leq \abs{\partial E_{t(\varepsilon)}} + \abs{\partial B}_{\R^3},
\end{equation}
holding because the sets $E_\varepsilon \cup B_j$, with $B_j \subset M$ satisfying $\abs{B_j} = \abs{B}_{\R^3}$ approaching the round ball $B \subset \R^3$ in the $\CS^0$-topology as $j \to \infty$, form a valid family of competitors for the isoperimetric problem of volume $V - \varepsilon$. The monotonicity of $I$ thus follows by showing that the constant $\H_E$ is strictly positive. To see this, we can argue as in \cite[Lemma 2.8]{benatti_isoperimetricriemannianpenroseinequality_2022}, after \cite[Remark, p. 394]{huisken_inversemeancurvatureflow_2001}. Namely, one can flow a geodesic ball in the asymptotic region, that is mean-convex by \cite[Lemma 4.3]{benatti_asymptoticbehaviourcapacitarypotentials_2022}, through the Mean Curvature Flow of mean-convex surfaces with surgery in Riemannian $3$-manifolds \cite{brendle_meancurvatureflowsurgery_2018}; as proved in such paper, this flow smoothly converges to the outermost minimal boundary $\partial M$. One can then find a surface in the evolution ``touching" $\partial E \setminus \partial M$. If $\H_E$ were nonpositive, this would result in a contradiction with the maximum principle. This completes the proof in neighbourhoods of volumes $V$ such that $E$ as above is nonempty.  

In case $E$ were empty, deforming $B \subset \R^3$ inwardly as above yields \cref{eq:confriso}, this time in terms of the mean curvature of $B \subset \R^n$, that is obviously strictly positive. This concludes the proof.
\endproof

With \cref{thm:shi}, \cref{thm:nardulli} and \cref{prop:isomon}  at hand, we can prove that isoperimetric sets exist in any $\CS^0$-asymptotically flat Riemannian $3$-manifold with nonnegative scalar curvature and horizon boundary, for any volume, provided some lower bound on the Ricci curvature is in force. This is a refinement of \cite[Proposition K.1]{carlotto_effectiveversionspositivemass_2016}. Useful insights about the strategy employed were actually proposed by Brendle-Chodosh \cite{brendle_volumecomparisontheoremasymptotically_2014}, including the key computations leading to \cref{thm:shi}. 

\begin{theorem}[Existence of isoperimetric sets in nonnegative scalar curvature]
\label{thm:isoperimetric-existence}
Let $(M, g)$ be  $\CS^0$-asymptotically flat Riemannian $3$-manifold with nonnegative scalar curvature and with Ricci curvature bounded from below, endowed with a closed, minimal outermost boundary. Then, for any $V > 0$, there exists an isoperimetric set of volume $V$.
\end{theorem}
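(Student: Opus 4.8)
The plan is to assemble the three preceding ingredients exactly along the lines of the heuristic stated just before \cref{thm:nardulli}: a volume‑$V$ minimizing sequence can fail to converge only by shedding a single Euclidean ball at infinity, and such a ball can be recaptured inside $M$ by a sublevel set of the weak IMCF at no cost in perimeter, thanks to Shi's reverse isoperimetric inequality. Strict monotonicity of the profile is used to match volumes exactly.

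Concretely, fix $V>0$. First I would invoke \cref{thm:nardulli} to produce a bounded $E\subset M$ and a Euclidean ball $B\subset\R^3$ with $V=\abs E+\abs B$ and $I(V)=\abs{\partial E}+\abs{\partial B}_{\R^3}$, where $\abs{\partial B}_{\R^3}=(36\pi)^{1/3}\abs B^{2/3}$; as recalled in the proof of \cref{prop:isomon}, $E$ is isoperimetric for its own volume, so $\abs{\partial E}=I(\abs E)$. If $\abs B=0$ we are done, since then $\abs E=V$ and $\abs{\partial E}=I(V)$, so $E$ itself is an isoperimetric set.

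Assume instead $b:=\abs B>0$. Pick a point $o$ in the asymptotically flat end, far enough from the (bounded) set $E$, and from $\partial M$, that the sublevel sets of the weak IMCF $w_o$ issuing from $o$ enclosing volume of order $b$ stay inside a near‑Euclidean neighbourhood of $o$ that is disjoint from $\overline E$. With $t(b)$ as in \cref{eq:t(v)}, set $\Omega:=\set{w_o\le t(b)}$, so that $\abs\Omega\ge b$. Since $(M,g)$ has nonnegative scalar curvature everywhere, the hypothesis of \cref{thm:shi} is automatic and yields $\abs{\partial\Omega}\le(36\pi)^{1/3}b^{2/3}=\abs{\partial B}_{\R^3}$. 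Now $E\cup\Omega$ is an admissible competitor, with $\abs{\partial(E\cup\Omega)}\le\abs{\partial E}+\abs{\partial\Omega}\le I(\abs E)+(36\pi)^{1/3}b^{2/3}=I(V)$ and, by the disjointness of $E$ and $\Omega$, volume $\abs E+\abs\Omega\ge V$. If $\abs\Omega>b$, then $\abs{E\cup\Omega}>V$, and the chain $I(\abs{E\cup\Omega})\le\abs{\partial(E\cup\Omega)}\le I(V)<I(\abs{E\cup\Omega})$ contradicts the strict monotonicity of $I$ from \cref{prop:isomon}. Hence $\abs\Omega=b$, so $E\cup\Omega$ has volume exactly $V$ and perimeter at most $I(V)$, i.e. it is an isoperimetric set of volume $V$.

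The main obstacle, and the reason \cref{prop:isomon} enters, is precisely this volume bookkeeping: the weak IMCF from $o$ realizes, as the volumes of its sublevel sets, all values except those in a countable union of intervals corresponding to the jump times, and a priori the shed volume $b$ could land in one of these gaps, forcing $\set{w_o\le t(b)}$ to overshoot; strict monotonicity of the profile forbids this. The only other point requiring care is guaranteeing that the recaptured region $\Omega$ is genuinely disjoint from $E$, so that combining them neither wastes perimeter nor loses volume; this is handled by pushing the pole $o$ far out into the end, where the metric is $\CS^0$‑close to the Euclidean one and the relevant sublevel sets are near‑round balls of controlled size. Once these are settled, the statement follows by directly combining \cref{thm:shi,thm:nardulli,prop:isomon}.
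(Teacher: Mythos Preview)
Your proof follows essentially the same route as the paper's, combining \cref{thm:nardulli}, \cref{thm:shi}, and \cref{prop:isomon} in the same way and closing with the same strict-monotonicity contradiction. The paper fills in two technical points you leave implicit: the weak IMCF from the point $o$ is constructed by first attaching a handlebody to $\partial M$ and extending the metric smoothly (so that \cref{thm:properties-imcf} applies on a boundaryless manifold, with $H_2(M,\partial M;\Z)=\{0\}$ ensuring connectedness of the level sets), and the disjointness of $\{w_o\le t(v)\}$ from $E$ and from $\partial M$ is justified via the uniform two-sided distance bounds \cref{eq:bound-mrs}, which is precisely where the Ricci lower bound enters.
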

\proof
Let $V > 0$. As above, we have 
\begin{equation}
\label{eq:dec}
I (V) = \abs{\partial E} + \abs{\partial B}_{\R^3}
\end{equation}
for a possibly empty $E\subset M$ and for a round ball $B$ in flat $\R^3$ satisfying $V = \abs{E} + \abs{B}_{\R^3}$. We can assume that $B$ is nonempty, otherwise $E$ is already the isoperimetric set of volume $V$ sought for.  


\smallskip

Let now $o \in M$ be far away from the boundary. We can assume that there exists a weak IMCF $w_o$ issuing from $o$, although this is not obtained through flows of hypersurfaces homologous to $\partial M$, as \cref{thm:properties-imcf} would require. In fact, we can attach a cap to $\partial M$, and extend smoothly the metric $g$ to this new complete manifold without boundary \cite{pigola_smoothriemannianextensionproblem_2016}. Then, \cref{thm:properties-imcf}, coupled with the limiting procedure already mentioned above, yields a weak IMCF $w_o$ issuing from $o$. It is known that, in our assumptions, $H_2(M, \partial M, \Z) =\set{0}$, see e.g.  \cite[Lemma 2.8]{benatti_isoperimetricriemannianpenroseinequality_2022} and the references contained in the proof there, consequently, the manifold without boundary obtained attaching the cap and extending the metric satisfies $H_2(M, \Z) =\set{0}$. 

By \cite[Theorem 1.3]{mari_flowlaplaceapproximationnew_2022}, there exist functions $f_1 f_2,: [0, +\infty) \to \R$ growing to infinity at infinity, and such that
\begin{equation}
\label{eq:bound-mrs}
    f_1(\dist(x, o)) \leq w_o(x) \leq f_2 (\dist(x, o))  
\end{equation}
for any $o \in M$.
Such result requires a Ricci lower bound and the validity of a global, possibly weighted Sobolev inequality. In the $\CS^0$-asymptotically flat case this holds with no weight, as a direct consequence of the uniform Euclidean-like isoperimetric inequality that can be directly obtained outside some sufficiently large compact set,  then, by \cite[Theorem 3.2]{pigola_connectivityinfinitymanifoldq_2014}, on the whole manifold. 
\smallskip

In light of \cref{eq:bound-mrs}, given $v > 0$ we can always find $o$ sufficiently far in space such that $\{w_o \leq t(v)\}$ is disjoint from the bounded $E$ and $\partial M$. Since such sublevel set is contained in a nonnegative scalar curved region of a boundaryless Riemannian manifold satisfying $H_2(M, \Z) = \{0\}$, we can apply \cref{thm:shi} and infer that Shi's reverse isoperimetric inequality \cref{eq:shi} holds for $t(v)$. Choose now $v = \abs{B}$, and $o$ as above. Let $F = \set{w_o \leq t(v)}$, and observe that, by definition, $\abs{F} \geq v$. Now, if $\abs{F} = v$, then by \eqref{eq:shi} the set $E \cup F$ is isoperimetric of volume $V$. If instead $\abs{F} > v$, then by the strict monotonicity of $I$ shown in \cref{prop:isomon} and \eqref{eq:dec} we have
\begin{equation}
    \label{eq:chainiso1}
    I (\abs{E} + \abs{F}) > I(\abs{E} + v) =I(V) = \abs{\partial E} + (36\pi)^{\frac{1}{3}} v^{\frac{2}{3}}.
\end{equation}
On the other hand, we also have
\begin{equation}
\label{eq:chainiso2}
     I (\abs{E} + \abs{F}) \leq \abs{\partial E} +\abs{\partial F} \leq \abs{\partial E}  + (36\pi)^{\frac{1}{3}} v^{\frac{2}{3}}, 
\end{equation}
where we used again \eqref{eq:shi}. The chaining of \eqref{eq:chainiso1} with \eqref{eq:chainiso2} leads to a contradiction, that concludes the proof.  
 \endproof

 \section{The isoperimetric mass and the isoperimetric Penrose inequality}
 \label{sec:penrose}
So far, we did not mention any particular example  of $3$-manifold with nonnegative scalar curvature and minimal, outermost boundary. Let us focus now on the archetypal one, that will actually constitute the model for the geometric inequalities that we are going to present. The  Schwarzschild manifold of dimension $3$ of (positive) mass $\ma$ is the space $\R^3\smallsetminus \set{\abs{x} < 2 \ma}$ endowed with the rotationally symmetric metric
\begin{equation}
\label{eq:metric-schwar}
g = \left( 1+ \frac{ \ma}{2 \abs{x}} \right)^{4}\eum_{ij}\,\dd x^i \otimes \dd x^j.
\end{equation}
This Riemannian manifold has zero scalar curvature, the boundary $\partial M = \{{\abs{x} = 2\ma}\}$ is minimal, and, since any other level set of $\abs{x}$ has constant mean curvature, such boundary is also outermost, since the presence of any other closed minimal surface would result in a contradiction with the Maximum Principle. 

\smallskip

It is known from early work of Bray \cite{bray_penroseinequalitygeneralrelativity_1997}, later generalized in various directions \cite{bray_isoperimetriccomparisontheoremschwarzschild_2002, brendle_constantmeancurvaturesurfaces_2013}, that the isoperimetric sets in this warped products are only the Euclidean annuli given by $E_R = \{2\ma \leq \abs{x} \leq R\}$. Coherently with Shi's reverse isoperimetric inequality and with the fact that balls form an Inverse Mean Curvature Flow, one checks that
\begin{equation}
    \abs{E_R} -\frac{\abs{\partial E_R}^{\frac{3}{2}}}{6 \sqrt{\pi}} \geq 0.
\end{equation}
Moreover, the quantity at the left-hand side is seen to be asymptotic, as $R \to +\infty$, to $R^{2}$. Interestingly enough, one computes that the mass parameter $\ma$ is in fact recovered as the limit
 \begin{equation}
 \label{isomass-svarch}
 \ma = \limsup_{R \to +\infty} \frac{2}{\abs{\partial E_R}}\left(\abs{E_R} -\frac{\abs{\partial E_R}^{\frac{3}{2}}}{6 \sqrt{\pi}}\right).
\end{equation}
These observations  serve as a motivation for the notion of \emph{isoperimetric mass}, first introduced by Huisken \cite{huisken_isoperimetricconceptmassgeneral_2006}.
\begin{definition}[Isoperimetric mass]
Let $(M, g)$ be a Riemannian $3$-manifold possibly with boundary, and with infinite volume. Then, its \emph{isoperimetric mass} is defined as
    \begin{equation}
        \ma_{\iso}=\sup_{(\Omega_j)_{j\in \N}}\limsup_{j \to +\infty} \frac{2}{\abs{\partial \Omega}}\left( \abs{\Omega} -\frac{\abs{\partial \Omega}^{\frac{3}{2}}}{6 \sqrt{\pi}}\right),
    \end{equation}
    where the supremum is taken among all exhaustions $(\Omega_j)_{j\in \N}$ consisting of bounded domains with $\CS^{1,\alpha}$-boundary.
\end{definition}

It is immediately checked from the above definition that, if an exhaustion consists of isoperimetric sets, this automatically realizes the required supremum. In particular, if the scalar curvature is nonnegative, the following holds. 
\begin{lemma}
    \label{lem:isoperimetrici-mass}
  Let $(M, g)$ be a $3$-dimensional $\CS^0$-asymptotically flat Riemannian manifold with nonnegative scalar curvature and Ricci curvature bounded from below, having a closed, outermost minimal boundary. 
  Then, we have
  \begin{equation}
      \label{eq:isoperimetricimassa}
        \ma_{\iso} = \limsup_{V \to +\infty} \frac{2}{\abs{\partial E_V}}\left(\abs{E_V} -\frac{\abs{\partial E_V}^{\frac{3}{2}}}{6 \sqrt{\pi}}\right),
  \end{equation}
  where $E_V$ is an 
isoperimetric set  of volume $V > 0$.
\end{lemma}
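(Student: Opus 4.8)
The plan is to test the supremum defining $\ma_{\iso}$ against the exhaustion by isoperimetric sets, using that along such an exhaustion the quantity depends on the enclosed volume alone. Write $I$ for the isoperimetric profile of $(M,g)$ and, for $V>0$, set
\[
\mathcal{F}(V)\ :=\ \frac{2}{I(V)}\left(V-\frac{I(V)^{3/2}}{6\sqrt{\pi}}\right)\ =\ \frac{2V}{I(V)}-\frac{\sqrt{I(V)}}{3\sqrt{\pi}}.
\]
Any isoperimetric set $E_V$ of volume $V$ satisfies $\abs{E_V}=V$ and $\abs{\partial E_V}=I(V)$ (for $V$ large the minimal horizon sits inside $E_V$ and is not part of $\partial E_V$), so the right-hand side of \cref{eq:isoperimetricimassa} is exactly $\limsup_{V\to+\infty}\mathcal{F}(V)$, independently of the (possibly non-unique) choice of $E_V$; by the regularity theory for isoperimetric sets $\partial E_V$ is $\CS^{1,\alpha}$, and for $V$ large $E_V$ is connected, so each $E_V$ is an admissible term of an exhaustion in the definition of $\ma_{\iso}$. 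The only elementary fact we shall use repeatedly is that, for each fixed $V>0$, the function $P\mapsto \frac{2V}{P}-\frac{\sqrt{P}}{3\sqrt{\pi}}$ is strictly decreasing on $(0,+\infty)$.

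The inequality $\ma_{\iso}\le \limsup_{V\to+\infty}\mathcal{F}(V)$ should be routine. Take any exhaustion $(\Omega_j)_{j\in\N}$ by domains with $\CS^{1,\alpha}$-boundary and set $V_j:=\abs{\Omega_j}\to+\infty$. Since $\Omega_j$ is a competitor for the isoperimetric problem of volume $V_j$, one has $\abs{\partial\Omega_j}\ge I(V_j)$ (perimeters being measured with the same convention throughout), so the monotonicity above gives
\[
\frac{2}{\abs{\partial\Omega_j}}\left(\abs{\Omega_j}-\frac{\abs{\partial\Omega_j}^{3/2}}{6\sqrt{\pi}}\right)\ \le\ \frac{2}{I(V_j)}\left(V_j-\frac{I(V_j)^{3/2}}{6\sqrt{\pi}}\right)\ =\ \mathcal{F}(V_j).
\]
Passing to $\limsup_{j\to+\infty}$ and using $V_j\to+\infty$, the left-hand side is at most $\limsup_{V\to+\infty}\mathcal{F}(V)$; taking the supremum over all exhaustions yields the claim.

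For the reverse inequality one must exhibit an exhaustion along which the quantity genuinely recovers $\limsup_{V\to+\infty}\mathcal{F}(V)$. Isoperimetric sets of every volume exist by \cref{thm:isoperimetric-existence}; the plan is then to invoke the structure theory of isoperimetric regions of large volume in $\CS^0$-Asymptotically Flat $3$-manifolds of nonnegative scalar curvature with horizon boundary, in the spirit of \cite{carlotto_effectiveversionspositivemass_2016}, to the effect that for $V\ge V_0$ the set $E_V$ is unique, the family $(E_V)_{V\ge V_0}$ is nested, and $\bigcup_{V\ge V_0}E_V=M$; alternatively, this exhaustion property can be argued directly, using that $\abs{\partial E_V}=I(V)\le (36\pi)^{1/3}V^{2/3}\to+\infty$ (as in the proof of \cref{thm:isoperimetric-existence}) together with the $\CS^0$-Asymptotically Flat geometry to exclude that an isoperimetric region of arbitrarily large volume avoids a fixed compact set. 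Granting this, choose $V_j\uparrow+\infty$ in $[V_0,+\infty)$ with $\mathcal{F}(V_j)\to\limsup_{V\to+\infty}\mathcal{F}(V)$; then $(E_{V_j})_{j\in\N}$ is an exhaustion of $M$ by $\CS^{1,\alpha}$-domains, so $\ma_{\iso}\ge\limsup_{j\to+\infty}\mathcal{F}(V_j)=\limsup_{V\to+\infty}\mathcal{F}(V)$ directly from the definition. Combined with the previous step, this proves \cref{eq:isoperimetricimassa}. The one genuinely non-formal ingredient, and the step I expect to be the main obstacle, is precisely this exhaustion property of large isoperimetric sets; the rest is monotonicity bookkeeping.
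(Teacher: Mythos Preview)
Your monotonicity step for $\ma_{\iso}\le \limsup_{V\to+\infty}\mathcal F(V)$ is exactly right and matches the paper's reasoning. The gap is in the reverse inequality: the exhaustion property you invoke for large isoperimetric sets is \emph{not} available in the $\CS^0$-Asymptotically Flat regime. The structure theory you allude to (uniqueness, nestedness, foliation by large CMC/isoperimetric sets) is established only under substantially stronger decay---essentially $\CS^2_\tau$ with $\tau>1/2$ as in Nerz, or asymptotically Schwarzschild as in Eichmair--Metzger---and the paper itself lists the extension to $\CS^1_\tau$ as an open problem (Section~\ref{sec:questions}). Your alternative sketch, using $\abs{\partial E_V}\to+\infty$ together with $\CS^0$-flatness to rule out that $E_V$ ``avoids a fixed compact set'', does not yield an exhaustion either: even granting that each $E_V$ meets a given compact $K$, you would still need $K\subset E_V$ eventually, and you have no monotonicity of the family $(E_V)$ to fall back on.

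The paper sidesteps this entirely. Rather than proving that isoperimetric sets exhaust $M$, it cites \cite[Proposition~37]{jauregui_lowersemicontinuitymass0_2019}, which shows that in the $\CS^0$-Asymptotically Flat setting the supremum defining $\ma_{\iso}$ is unchanged if one replaces ``$(\Omega_j)$ is an exhaustion'' by the weaker requirement ``$\abs{\partial\Omega_j}\to+\infty$''. Since $\abs{\partial E_{V_j}}=I(V_j)\to+\infty$ is immediate, the sequence $(E_{V_j})$ is then admissible without any structural input, and the reverse inequality follows. This is both shorter and the only route that actually works at $\CS^0$ decay; you should replace your exhaustion argument with this reference.
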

It should be taken into account the following \emph{caveat}. The sequences of isoperimetric sets $(E_{V_j})_{j\in \N}$ of increasing volumes $V_j$ is not a priori form an exhaustion. However, the useful result \cite[Proposition 37]{jauregui_lowersemicontinuitymass0_2019} asserts that, in the $\CS^0$-asymptotically flat case,  one can relax the definition of $\ma_{\iso}$ in order to replace the requirement that $(\Omega_j)_{j\in \N}$ form an exhaustion, with the assumption $\abs{\partial \Omega_i} \to +\infty$ as $i \to +\infty$, establishing the validity of the above lemma.

\smallskip

Getting back to the Schwarzschild model, as a consequence of \cref{lem:isoperimetrici-mass}, we immediately deduce that $\ma_{\iso} = \ma$. Moreover, just by computing the value of $\abs{\partial M} = \{{\abs{x} = 2\ma}\}$ from the expression for $g$ in \cref{eq:metric-schwar}, one sees that
\begin{equation}
\label{eq:identity-mass-svar}
\sqrt{\frac{ \abs{\partial M}}{16 \pi}} = \ma_{\iso}.
\end{equation}
Together with Mazzieri, we proved that, in the general case considered in \cref{sec:isoperimetry} and actually with no Ricci lower bound required, the quantity $\sqrt{\abs{\partial M}/16\pi}$ is controlled from above by $\ma_{\mathrm{iso}}$. This kind of inequalities, bounding from below a suitable notion of mass in terms of the area of horizon boundary, are usually denominated Penrose inequalities.
\begin{theorem}[Isoperimetric Penrose inequality]
\label{iso-penrose}
Let $(M, g)$ be a $\CS^0$-asymptotically flat  Riemannian $3$-manifold with nonnegative scalar curvature and closed, minimal outermost \emph{connected} boundary. Then, 
\begin{equation}
\label{eq:iso-penrose}
\sqrt{\frac{ \abs{\partial M}}{16 \pi}} \leq  \ma_{\iso}.
    \end{equation}
    Moreover, equality holds if and only if $(M, g)$ is a Schwarzschild $3$-manifold with $\ma_{\iso} = \ma$.
\end{theorem}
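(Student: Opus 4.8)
The plan is to establish the inequality \eqref{eq:iso-penrose} by running the weak Inverse Mean Curvature Flow from the horizon $\partial M$ itself, exploiting the Geroch monotonicity of \Cref{thm:geroch} together with Shi-type control on the level sets, and then identifying the limit of the Hawking mass along the flow with $\ma_{\iso}$ via \Cref{lem:isoperimetrici-mass}. First I would let $w$ be the weak IMCF issuing from $\Omega = M$ (i.e. from $\partial \Omega = \partial M$), which exists by \Cref{thm:properties-imcf} since $H_2(M, \partial M, \Z) = \{0\}$ in this setting and $\partial M$ is connected, so that all level sets $\Sigma_t = \partial\{w \leq t\}$ stay connected. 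Because $\partial M$ is minimal and outermost, it is in particular outward minimizing, so \Cref{thm:geroch} applies and $\ma_H(\Sigma_t)$ is monotone nondecreasing in $t$. At $t = 0$ the Hawking mass of the minimal surface $\partial M$ is exactly $\abs{\partial M}^{1/2}/(16\pi)^{1/2} = \sqrt{\abs{\partial M}/16\pi}$, since the mean curvature term vanishes. Hence $\sqrt{\abs{\partial M}/16\pi} = \ma_H(\partial M) \leq \ma_H(\Sigma_t)$ for all $t \geq 0$, and it remains to show $\limsup_{t\to+\infty} \ma_H(\Sigma_t) \leq \ma_{\iso}$.

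The core of the argument is therefore the asymptotic inequality $\limsup_{t \to +\infty}\ma_H(\Sigma_t) \leq \ma_{\iso}$. Here I would use the same Hölder trick that produced \Cref{thm:shi}: along the weak flow $\abs{\Sigma_t} = \abs{\partial M}\,\ee^t$ and $v(t) := \abs{\{w \leq t\}}$ satisfies $v'(t) = \int_{\Sigma_t} \H^{-1}$ together with the reverse Willmore bound; but now the reverse Willmore bound must be replaced by the \emph{exact} identity relating $\int_{\Sigma_t}\H^2$ to $\ma_H(\Sigma_t)$, namely $\int_{\Sigma_t}\H^2 = 16\pi - 16\pi^{3/2}\ma_H(\Sigma_t)\abs{\Sigma_t}^{-1/2}$. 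Plugging this into \eqref{eq:stimashi} and integrating the resulting differential inequality for $v(t)$ from some fixed time, one obtains a lower bound of the shape $v(t) \geq \frac{1}{6\sqrt\pi}\abs{\Sigma_t}^{3/2} + \ma_H(\Sigma_t)\,\abs{\Sigma_t} \cdot \tfrac12(1 + o(1))$ as $t \to +\infty$ — so in effect, rearranging, $\tfrac{2}{\abs{\Sigma_t}}\big(v(t) - \tfrac{1}{6\sqrt\pi}\abs{\Sigma_t}^{3/2}\big) \geq \ma_H(\Sigma_t) (1 + o(1))$. Since $\abs{\Sigma_t} \to +\infty$ and $\Sigma_t = \partial\{w\leq t\}$ are $\CS^{1,\alpha}$ domains, the left-hand side is bounded above by $\ma_{\iso}$ using the relaxed definition (via \cite[Proposition 37]{jauregui_lowersemicontinuitymass0_2019}, exactly as in the proof of \Cref{lem:isoperimetrici-mass}, where the exhaustion requirement is dropped in favour of $\abs{\partial\Omega_j} \to +\infty$). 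Combining, $\limsup_{t\to+\infty}\ma_H(\Sigma_t) \leq \ma_{\iso}$, which closes the chain $\sqrt{\abs{\partial M}/16\pi} \leq \ma_{\iso}$.

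The main obstacle I anticipate is the rigorous handling of the monotonicity and of the differential inequality \emph{through the jump times} of the weak flow, and making the $o(1)$ error in the asymptotic integration precise — in particular controlling the Hawking mass not just at smooth times but verifying that the monotone limit $\lim_{t\to+\infty}\ma_H(\Sigma_t)$ genuinely exists and is captured by the isoperimetric-type quotient above (one has to be careful that the $\int\H^2$ term enters with the right sign and that jumps, which only increase $\ma_H$ while preserving area, are consistent with the direction of the inequality). A secondary technical point is that the flow issuing from $\partial M$ may need the handlebody-extension device of \cite{pigola_smoothriemannianextensionproblem_2016} exactly as in the proof of \Cref{thm:isoperimetric-existence} to legitimately invoke \Cref{thm:properties-imcf}, since $\partial M$ must be treated as homologous to the boundary of the ambient manifold; one checks the extension does not affect $H_2$ nor the region of nonnegative scalar curvature traversed.

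For the rigidity statement, the plan is the standard one: equality in \eqref{eq:iso-penrose} forces equality, for all $t$, in the Geroch monotonicity computation. Tracing back through the smooth monotonicity calculation reproduced in \Cref{subsec:hawking}, the vanishing of the nonnegative terms $\int_{\Sigma_t}\big(2\abs{\nabla_{\Sigma_t}\H_t}^2/\H_t^2 + \sca + \abs{\mathring\h_t}^2\big)$ and the saturation of Gauss–Bonnet $\chi(\Sigma_t) = 2$ force: the ambient scalar curvature to vanish along the flow, each $\Sigma_t$ to be totally umbilic with constant mean curvature and topologically a sphere, and $\H_t$ to be constant on each leaf. Umbilicity of the whole foliation by constant–mean–curvature round spheres, together with scalar flatness, is the classical infinitesimal rigidity that identifies $(M,g)$ with a Schwarzschild exterior, the mass parameter being pinned down by $\abs{\partial M} = 16\pi\ma^2$; and then $\ma_{\iso} = \ma$ follows from \eqref{eq:isomass-svarch} and \Cref{lem:isoperimetrici-mass}. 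The converse, that Schwarzschild realizes equality, is the explicit computation \eqref{eq:identity-mass-svar} already recorded in the excerpt.
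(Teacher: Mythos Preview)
Your proposal is correct and follows essentially the same strategy as the paper: run the weak IMCF from the horizon, use Geroch monotonicity to propagate $\ma_H(\partial M)=\sqrt{\abs{\partial M}/16\pi}$ forward, and then compare the limiting Hawking mass with $\ma_{\iso}$ via the H\"older trick \eqref{eq:stimashi}. The only difference in presentation is that the paper reaches the asymptotic comparison \eqref{eq:asycomp} by applying a de L'H\^opital rule to the isoperimetric quotient and then manipulating algebraically, whereas you propose to integrate the differential inequality for $v(t)$ directly; these are equivalent, though note that your integrated bound naturally produces $\ma_H(\Sigma_{t_0})$ (for the fixed starting time) rather than $\ma_H(\Sigma_t)$, and one then sends $t_0\to+\infty$ afterwards---so the $(1+o(1))$ should be read in that two-step sense.

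Two of your anticipated obstacles are not actually present. First, the handlebody extension is unnecessary here: \cref{thm:properties-imcf} explicitly allows $\partial\Omega=\partial M$ (see the sentence just before \eqref{eq:IMCF}), so the flow from the horizon is covered directly, unlike the flow-from-a-point used in the proof of \cref{thm:isoperimetric-existence}. Second, since $w$ is proper, the sublevel sets $\{w\le t\}$ form a genuine exhaustion, so you can invoke the definition of $\ma_{\iso}$ directly without passing through \cref{lem:isoperimetrici-mass} or the Jauregui--Lee relaxation. The rigidity argument you sketch is the standard one and matches the paper's (implicit) treatment.
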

Additional discussion on Penrose inequalities, including physical motivations, will be presented in the next section, where the isoperimetric mass will be compared with other concepts of mass, most notably with the ``classical" $\ADM$ mass (\cref{def:adm-mass}). For the time being, we just point out that it is well defined only under stronger assumptions on the asymptotic decay of the metric towards the flat one, and that these two notions of mass do actually coincide in such case. 

The general strategy for the proof of \cref{eq:iso-penrose} is the same as Huisken-Ilmanen's \cite{huisken_inversemeancurvatureflow_2001}, where the inequality was proved in terms of the $\ADM$ mass. It consists in exploiting again the monotonicity of the Hawking mass, this time along the weak Inverse Mean Curvature Flow of the minimal boundary. Since the mean curvature $\H$ vanishes on $\partial M$, the initial value of this quantity becomes
\begin{equation}
\label{eq:hawking-t0}
    \ma_H(\partial M) = \sqrt{\frac{\abs{\partial M}}{16\pi}},
\end{equation}
hence, since $\ma_H$ is nondecreasing along the flow by \cref{thm:geroch}, to prove \cref{eq:iso-penrose} it suffices to show that
\begin{equation}
\label{eq:asycomp}
    \lim_{t \to + \infty} \ma_H{(\partial \Omega_t)} \leq  \liminf_{t \to +\infty} \frac{2}{\abs{\partial \Omega_t}}\left( \abs{\Omega_t} -\frac{\abs{\partial \Omega_t}^{\frac{3}{2}}}{6 \sqrt{\pi}}\right) \leq \ma_{\mathrm{iso}}.
\end{equation}
This last step is completely different than Huisken-Ilmanen's asymptotic comparison with $\ma_{\ADM}$. Notably, it is by far computationally easier and does not require asymptotic flatness at all. 
\begin{remark}
In the proof of \cref{eq:iso-penrose} we just have to assume the existence of a proper weak IMCF's. The issue of the existence of such weak IMCF's is a very interesting topic \emph{per se}, related to some deep metric properties of the underlying manifold, see \cite{kotschwar_localgradientestimatesharmonic_2009, mari_flowlaplaceapproximationnew_2022}.
\end{remark}
We provide a (sketch of) the proof of the first asymptotic estimate appearing in \cref{eq:asycomp}  highlight the relation  with the computations leading to \cref{eq:stimashi} above.
\begin{proof}[Proof of \cref{eq:asycomp}]
We directly assume, for simplicity, that, as $t \to +\infty$, 
\begin{equation}
    \label{eq:h^2to16pi}
    \int_{\partial \Omega_t} \H^2 \dif\sigma \to 16\pi
\end{equation}
as $t \to +\infty$. We actually proved in \cite{benatti_isoperimetricriemannianpenroseinequality_2022} that this always can be assumed along the subsequence of $\Omega_t$'s we are going to consider, otherwise $\ma_{\mathrm{iso}}$ would be infinite, making \cref{eq:asycomp} trivial. 
We then can apply a version of de L'H\^opital rule (see e.g. \cite[Theorem A.1]{benatti_isoperimetricriemannianpenroseinequality_2022}), to get
\begin{equation}
\label{eq:hospital}
\liminf_{t \to +\infty} \frac{2}{\abs{\partial \Omega_t}}\left( \abs{\Omega_t} -\frac{\abs{\partial \Omega_t}^{\frac{3}{2}}}{6 \sqrt{\pi}}\right) \geq\liminf_{t \to +\infty} \frac{2}{\abs{\partial \Omega_t}}\left(\kern.1cm \int_{\partial \Omega_t} \frac{1}{\H} \dif \sigma -\frac{\abs{\partial \Omega_t}^{\frac{3}{2}}}{4 \sqrt{\pi}}\right).
\end{equation}
Applying the H\"older inequality as in \cref{eq:stimashi}  gives
\begin{equation}\label{eq:second_IMCF_de_lhopital}
\begin{split}
    \liminf_{t \to +\infty} \frac{2}{\abs{\partial \Omega_t}}\left( \abs{\Omega_t} -\frac{\abs{\partial \Omega_t}^{\frac{3}{2}}}{6 \sqrt{\pi}}\right) &\geq \liminf_{t \to +\infty}2 \left(\frac{\abs{ \partial \Omega_t}}{\int_{\partial \Omega_t}\H^2_t \dif \sigma}\right)^{\frac{1}{2}}\left( 1- \frac{1}{4\sqrt{\pi}}\left(\kern.1cm\int_{\partial \Omega_t} \H^2 \dif \sigma\right)^{\frac{1}{2}}\right)\\
    &=\liminf_{t \to +\infty}2 \left(\frac{\abs{ \partial \Omega_t}}{\int_{\partial \Omega_t}\H^2_t \dif \sigma}\right)^{\frac{1}{2}}\frac{ 1- \frac{1}{16\pi}\int_{\partial \Omega_t} \H^2 \dif \sigma}{1+( \frac{1}{16\pi}\int_{\partial \Omega_t} \H^2 \dif \sigma)^{\frac{1}{2}} }\\
    &=\liminf_{t \to +\infty}\frac{2 \ma_H(\partial \Omega_t)}{( \frac{1}{16\pi}\int_{\partial \Omega_t} \H^2 \dif \sigma)^{\frac{1}{2}}+\frac{1}{16\pi}\int_{\partial \Omega_t} \H^2 \dif \sigma} \, .
\end{split}
\end{equation}
The proof is completed by taking into account \cref{eq:h^2to16pi} and the monotonicity of $\ma_H$ along the weak IMCF, allowing us to show that limit inferior in the last line is actually a limit.
\end{proof}
Inequality \cref{eq:iso-penrose} states that  manifolds with nonnegative scalar curvature and minimal outermost boundary always tend to have more volume in big regions of  prescribed area than in the Schwarzchild model.
In the next section, through the notion of $\ma_{\mathrm{ADM}}$ mass, we will in particular show a direct link between  isoperimetric sets of large volumes in nonnegative scalar curvature and the energy/matter concepts in general relativity. 
\section{The \texorpdfstring{$\ADM$}{ADM} mass and comparison with other  masses}
\label{sec:admandco}
In order to better understand the heuristics behind the definition of the ${\ADM}$ mass below, we start by recalling why manifolds fulfilling the assumptions considered above are so natural in the context of General Relativity. This groundbreaking physical theory is formulated in the context of Lorentzian geometry and, specifically, in a $4$-manifold $(L, \gr)$ satisfying the \emph{vacuum} Einstein Field Equations
\begin{equation}
\label{eq:einsteneq}
    \Ric_\gr -\frac{1}{2}\sca_\gr = \mathrm{T},
\end{equation}
where $\mathrm{T}$ is called \emph{stress-energy} tensor and should be thought of as a physical datum. A largely accepted assumption on $\mathrm{T}$ is the \emph{dominant energy condition}, amounting simply to $\mathrm{T} (V, V) \geq 0$ for any \emph{timelike} vector $V$, that is $\gr(V, V) < 0$. Consider now a \emph{spacelike} $3$-hypersurface $(M, g)$, with $g$ induced by $\gr$, that is  $\gr(X, X) =  g(X, X) > 0$ for any $X \in T_pM$ and any $p \in M$. We also consider the simplest case of a \emph{time symmetric} $(M, g)$, consisting in the vanishing of the second fundamental form of the immersion $M \hookrightarrow L$. By means of the Gauss-Codazzi equations, \cref{eq:einsteneq}, coupled with the dominant energy condition $\mathrm{T} \geq 0$ induces on $(M, g)$ the identity
\begin{equation}
\label{eq:einstein-constrained}
    \sca_g = 16 \pi \rho \geq 0,
\end{equation}
where $\rho = \mathrm{T}(V, V)$ for some timelike vector $V$, hence the inequality follows from the dominant energy condition. We address the reader to \cite[Section 1.1]{carlotto_generalrelativisticconstraintequations_2021} for a detailed derivation of \cref{eq:einstein-constrained}.

The \emph{event horizon} of a \emph{black hole} in $(L, \gr)$ manifests itself in $(M, g)$ as a minimal outermost boundary. Finally, when modeling an \emph{isolated gravitational system}, it becomes natural to assume some kind of asymptotic flatness. With a physical language, this condition says that the gravitational field is not influenced by the presence of some mass ``at infinity".
We address the interested reader to \cite{hawking_largescalestructurespacetime_1973,lee_geometricrelativity_2019} for a comprehensive treatment of these relativistic concepts. 
\subsection{The \texorpdfstring{$\ADM$}{ADM} mass}
To get an idea of the $\ADM$ mass, we briefly stick with the more familiar Newtonian framework. 
Suppose to have a mass density $\rho$ in a $3$D-time snapshot represented by an isolated gravitational system $(M,g)$. Newton's formulation models the gravitational field by a function $V$ called \emph{gravitational potential}, which satisfies the relation $\Delta V= 4 \pi \rho$. Hence, using the divergence theorem, one can reconstruct the total mass of the system by observing the effect of the gravitational potential at infinity, since
\begin{equation}
    \ma = \int_M \rho \dif \mu = \int_M \frac{ \Delta V}{4\pi}\dif \mu = \lim_{r \to +\infty} \int_{\partial B_r} \frac{ \partial V}{ \partial r} \dif \sigma.
\end{equation}
On the other hand, we just recalled that, in Einstein's formulation, the gravitational field is modelled by a metric $g$ on $M$ which is only constrained (in the time-symmetric case) by the equation $\sca_g= 16 \pi \rho$. One then would like to compute the system's total mass simply by knowing the metric in this case as well. A first idea may be to integrate the quantity $\sca_g/16 \pi$ as we did in the Newtonian case. However, this approach has at least two main issues. The first one is that, even in a special paradigmatic case such as the Schwarzschild manifold, the total mass would be zero, which does not correspond to what we expect. The second problem is that the ``superposition principle" does not hold, since the scalar curvature is not a linear operator of $g$. In the case $(M,g)$ is close to the Euclidean flat space, one could try to circumvent this by replacing the scalar curvature with its first-order linearization, that is
\begin{equation}
    \sca_g \approx \sca_\delta + \D \sca\vert_{\delta}(g-\delta) =\frac{\dd }{\dd \varepsilon}\sca(\delta + \varepsilon(g-\delta) )\big\vert_{\varepsilon=0},
\end{equation}
since $\sca_\delta=0$. Observe that, in the above formula, $h=\delta + \varepsilon(g-\delta)$ is still a metric for $\abs{\varepsilon}$ small enough, 
 thus it makes sense to compute the scalar curvature of $h$, which is 
\begin{align}
    \sca_h &= h^{ij}(\partial_k H^k_{ij}- \partial_i H^k_{kj}+ H^k_{ij} H^{s}_{ks}- H^k_{is}H^s_{jk}) \\&=\frac{\varepsilon}{2}\delta^{ij}\delta^{ks}\partial_k(-\partial_s g_{ij}+ \partial_j g_{is} + \partial_i g_{js})-\frac{\varepsilon}{2}\delta^{ij}\delta^{ks}\partial_i(-\partial_s g_{kj}+ \partial_j g_{ks} + \partial_k g_{js})+ O_1(\varepsilon^2) \\
    &=\frac{\varepsilon}{2}\delta^{ij}\delta^{ks}\partial_k(-\partial_s g_{ij}+ 2\partial_j g_{is})-\frac{\varepsilon}{2}\delta^{ij}\delta^{ks}\partial_i \partial_j g_{ks} + O_1(\varepsilon^2)\\
    &= \varepsilon \delta^{ij}\delta^{ks} (\partial_k \partial_i g_{js} - \partial_s \partial_k g_{ij}) +O_1(\varepsilon^2)
\end{align}
where $H$ are the Christoffel symbols of $h$. We are employing the normal coordinates of flat $\R^3$. Hence,
\begin{equation}\label{eq:approximation_scalar_curvature}
    \sca_g\approx\delta^{ij}\delta^{ks} (\partial_k \partial_i g_{js} - \partial_k \partial_s g_{ij}).
\end{equation}
By means of the divergence theorem, one has
\begin{align}
\label{eq:conti-adm}
    \frac{1}{16 \pi }\int_M \sca_g \dif \mu_g &\approx\frac{1}{16 \pi }\int_M \delta^{ij}\delta^{ks} (\partial_k \partial_i g_{js} - \partial_k \partial_s g_{ij})\dif \mu_\delta \\&= \lim_{r \to +\infty} \frac{1}{16 \pi }\int_{\partial B_r}\delta^{ij}( \partial_i g_{jk} - \partial_k g_{ij})\frac{x^k}{\abs{x}}\dif \sigma_\delta\\
    &= \lim_{r \to +\infty} \frac{1}{16 \pi }\int_{\partial B_r}g^{ij}( \partial_i g_{jk} - \partial_k g_{ij})\nu^k\dif \sigma_g.
\end{align}
The quantity appearing on the right-hand side is called Arnowitt-Deser-Misner's mass $\ma_{\ADM}$ \cite{arnowitt_coordinateinvarianceenergyexpressions_1961}. 
If the metric $g$ is sufficiently close to the flat metric, the above computation shows that the integral of the scalar curvature is approximated by such mass. Clearly, the two quantities are not, in general, the same, since the first one depends on the global behaviour of the metric, while the second one depends only on its behaviour at large distances. On the other hand, it has been showed in \cite{arnowitt_coordinateinvarianceenergyexpressions_1961} that, in a suitable asymptotic setting, the finiteness of the one is equivalent implies the finiteness of the other. To see this, one can consider the vector field,
\begin{equation}
    Y^k = g^{kl}g^{ij}(\partial_i g_{jl} - \partial_l g_{ij}) = g^{ij} \Gamma^{k}_{ij} -g^{ik}\Gamma^{j}_{ij},
\end{equation}
inspired by the fact that, formally, $\ma_{\ADM} = \lim_{r\to + \infty} \int_{\partial B_r} \langle Y, \nu \rangle \dif \sigma_g$. This suggests to compute the divergence of $Y$ with respect to $g$, that is
\begin{equation}
\label{eq:diveY}
\begin{split}
   \mathrm{div} Y &=  \partial_k Y^k + \Gamma^l_{lk}Y^k \\
   &= \partial_k g^{ij}\Gamma^k_{ij} + g^{ij}\partial_k \Gamma^k_{ij}-\partial_k g^{ik} \Gamma^{j}_{ij} -  g^{ik}\partial_k\Gamma^{j}_{ij} +g^{ij}\Gamma^l_{lk}\Gamma^{k}_{ij}-g^{ik}\Gamma^l_{lk}\Gamma^{j}_{ij}\\&= \sca_g +\partial_k g^{ij}\Gamma^k_{ij}-\partial_k g^{ik} \Gamma^{j}_{ij}.
\end{split}
\end{equation}
The asymptotic conditions we assume  are
\begin{align}
\label{eq::condadm}
    \kst^{-1}\delta \leq g \leq \kst \delta  & \text{ in $M\smallsetminus B_R$}, & \int_{M \smallsetminus B_R} \abs{ \partial g}^2 \dif \mu_\delta<+\infty
\end{align}
and we show that the limit defining the $\ADM$ mass exists. Appealing  to the divergence theorem and to \cref{eq:diveY}, we have
\begin{align}
\label{eq:relYscaletc}
    \abs{\int_{\partial B_s} \ip{Y|\nu} \dif \sigma_g - \int_{\partial B_r} \ip{Y |\nu} \dif \sigma_g - \int_{B_s \smallsetminus B_r} \sca_g \dif \mu_g}  \leq \kst \int_{B_s \smallsetminus B_r}  \abs{ \partial g}^2 \dif \mu_\delta
\end{align}
for every $R\leq r<s$. 

Assume now that $\sca_g^{-}=-\min\set{\sca_g,0}\in L^1(M)$.
Taking the inferior limit in \cref{eq:relYscaletc},  as $s \to +\infty$, we get
\begin{equation}
    \liminf_{s \to +\infty}\int_{\partial B_s} \ip{Y|\nu} \dif \sigma_g \geq -\int_{M\smallsetminus B_r} \sca^-_g \dif \mu_g -   \mathrm{C}\int_{M\smallsetminus B_r} \abs{ \partial g}^2 \dif \mu_\delta +\int_{\partial B_r} \ip{Y |\nu} \dif \sigma_g,
\end{equation}
and considering the superior limit as $r \to +\infty$, in view of our assumptions we get
\begin{equation}
    \liminf_{s \to +\infty}\int_{\partial B_s} \ip{Y|\nu} \dif \sigma_g \geq \limsup_{r \to +\infty}\int_{\partial B_r} \ip{Y |\nu} \dif \sigma_g,
\end{equation}
proving the existence of the limit defining $\ma_{\ADM}$. Moreover, since 
\begin{equation}
    \abs{\ma_{\ADM}-\int_{M\smallsetminus B_R} \sca_g \dif \mu_{g}} \leq\int_{\partial B_R} \abs{Y}\dif \sigma_g + \kst \int_{M \smallsetminus B_R} \abs{ \partial g}^2 \dif \mu_\delta
\end{equation}
the $\ADM$ mass is finite if and only the scalar curvature is integrable on $M$.

In order to complete the presentation of the $\ADM$ mass, we should ensure its independence on the asymptotic chart at infinity chosen, so that the rightmost-hand side of \cref{eq:conti-adm} provides a well-posed definition. This has been proved under slightly stronger asymptotic conditions than those in \cref{eq::condadm} by Bartnik \cite{bartnik_massasymptoticallyflatmanifold_1986} and Chru{\'s}ciel \cite{chrusciel_boundaryconditionsspatialinfinity_1986}, that is assuming $g$ to be $\CS^{1}_\tau$-asymptotically flat, $\tau>1/2$.  
\begin{definition}
\label{def:adm-mass}
    Let $(M, g)$ be a $\CS^{1, \tau}$-asymptotically flat Riemannian $3$-manifold, for $\tau > 1/2$, and such that $\sca_g^- \in L^1(M)$. Then, the $\ma_{\ADM}$ mass is  (well) defined as
    \begin{equation}
\label{eq:madm}
        \ma_{\ADM} = \lim_{r \to +\infty} \frac{1}{16 \pi }\int_{\partial B_r}g^{ij}( \partial_i g_{jk} - \partial_k g_{ij})\nu^k\dif \sigma_g.
    \end{equation}
    Moreover, it is finite if and only if the scalar curvature is integrable on $M$.
\end{definition}
It has actually been shown \cite{chrusciel_boundaryconditionsspatialinfinity_1986} that the threshold $\tau > 1/2$ is sharp, since one can exhibit $\CS^1_{1/2}$-asymptotically flat metrics such that the limit in \cref{eq:madm} gives arbitrary values according to the selected chart at infinity. 
\subsection{Equivalence between masses}
A large amount of literature deals with geometric inequalities involving the $\ADM$ mass. Most notably, Schoen-Yau's Positive Mass Theorem \cite{schoen_proofpositivemassconjecture_1979, schoen_proofpositivemasstheorem_1981}, ensures, under suitable decay conditions on the metric, that $\ma_{\ADM} \geq 0$ in nonnegatively scalar curved complete manifolds, and that it vanishes only on flat $\R^3$. Solving a special case of a conjecture of Penrose \cite{penrose_nakedsingularities_1973}, Huisken-Ilmanen \cite{huisken_inversemeancurvatureflow_2001} sharpened such result in the connected  boundary case, obtaining the (Riemannian) Penrose inequality
\begin{equation}
\label{eq:adm-penrose}
    \sqrt{\frac{ \abs{\partial M}}{16 \pi}} \leq  \ma_{\ADM}.
\end{equation}
With the description given above of $\ma_{\ADM}$ as an actual (candidate) for a  global physical mass, the validity of the (Riemannian) Penrose inequality should appear more natural. In fact, the Schwarzschild metric, where the inequality is checked to hold as an equality, represents a space where the whole matter is shielded by the horizon $\partial M$. Consequently, any other space should have a larger global mass, in relation with the area of the boundary, as \cref{eq:adm-penrose} actually states.

Huisken-Ilmanen's proof of \cref{eq:adm-penrose} exploits their \cref{thm:geroch} coupled with an involved asymptotic analysis resulting in $\lim_{t \to +\infty}\ma_H(\partial \Omega_t) \leq \ma_{\ADM}$, along the weak IMCF of the horizon boundary. Bray \cite{bray_proofriemannianpenroseinequality_2001}, with a completely different proof based on an \emph{ad hoc} developed conformal flow of metrics, removed the assumption of connected boundary. In both approaches, the decay assumptions are strictly stronger than in \cref{def:adm-mass} t namely $\CS^{1}_1$-asymptotically flatness and $\Ric \geq -\kst \abs{x}^{-2}$ in \cite{huisken_inversemeancurvatureflow_2001}, and $\CS^{2}_1$-asymptotically flatness in \cite{bray_proofriemannianpenroseinequality_2001}.

\smallskip

The question about the validity of \cref{eq:adm-penrose} in the optimal $\CS^1_{\tau}$-asymptotically flat regime arises then naturally, and has been answered in collaboration with Mazzieri \cite[Theorem 1.1]{benatti_isoperimetricriemannianpenroseinequality_2022}.
Another even more natural (and more general) question regards the relation between $\ma_{\iso}$ and $\ma_{\ADM}$. Sharpening the earlier result envisioned by Huisken \cite{huisken_isoperimetricconceptmassgeneral_2006} and rigorously obtained (and strengthened) by Jauregui-Lee \cite{jauregui_lowersemicontinuitymass0_2019}, we do obtain also \cite[Theorem 4.13]{benatti_isoperimetricriemannianpenroseinequality_2022} the stronger conclusion
\begin{equation}
\label{eq:identity-masses}
\ma_{\iso} = \ma_{\ADM}
\end{equation}
in  $\CS^{1}_\tau$-asymptotically flat Riemannian manifolds with nonnegative scalar curvature and horizon boundary, for $\tau > 1/2$. We observe that such identity provides also a  proof of the well-posedness of $\ma_{\ADM}$ in this asymptotic regime completely different from the ones of Bartnik and Chr\`usciel \cite{bartnik_massasymptoticallyflatmanifold_1986, chrusciel_boundaryconditionsspatialinfinity_1986}, in the case of nonnegative scalar curvature. Indeed, the isoperimetric mass is manifestly independent on the choice of the coordinate chart.

The proofs of both the Penrose inequality \cref{eq:adm-penrose} and of \cref{eq:identity-masses} under optimal decay assumptions in \cite{benatti_isoperimetricriemannianpenroseinequality_2022} are based on some new insights about the asymptotic behaviour of the weak IMCF. The sharpening of the decay assumptions in the Penrose inequality substantially stems from the asymptotic comparison \cref{eq:asycomp} and on its relation with the asymptotic behaviour of the harmonic functions that takes advantage of the potential-theoretic version of the Hawking mass introduced in \cite{agostiniani_greenfunctionproofpositive_2021}. The equality between the two masses \cref{eq:identity-masses} fundamentally relies also on a combination with Huisken-Jauregui-Lee argument \cite{huisken_isoperimetricconceptmassgeneral_2006, jauregui_lowersemicontinuitymass0_2019} building on the Mean Curvature Flow.

\smallskip

In what follows, we
 outline an alternative strategy devised by Chodosh-Eichmair-Shi-Yu \cite{chodosh_isoperimetryscalarcurvaturemass_2021} that can be useful to prove \cref{eq:identity-masses}, showing a relation with the isoperimetric sets that have been discussed before.

\smallskip

Before going on, we point out that the inequality 
\begin{equation}
\label{eq:madmleqmiso}
    \ma_{\ADM} \leq \ma_{\iso}
\end{equation}
can be obtained in the optimal asymptotic regime through a direct yet not trivial computation that has been carried out in \cite{fan_largespheresmallspherelimitsbrownyork_2009},  so we will deal with the reverse one only. 
 A key observation is the following,  deduced by the proof of \cite[Theorem C.1]{chodosh_isoperimetryscalarcurvaturemass_2021}. When isoperimetric sets are known to exist, as in the setting of \cref{thm:isoperimetric-existence}, they do realize, in the limit of infinite volume, by \cref{lem:isoperimetrici-mass},  the isoperimetric mass. Since each of them has obviously constant mean curvature, the asymptotic comparison argument carried out in \cite{benatti_isoperimetricriemannianpenroseinequality_2022} can be reversed in some sense, allowing to estimate the isoperimetric mass \emph{from above} with the Hawking mass. This inspiring idea is expressed in the computation  \cref{eq:Cholimsupestimate}.

Starting from the next proposition and for all results up to the end of \cref{sec:admandco}, we will drop the outermost condition on the boundary.
This is due to known properties of asymptotically flat $3$-manifolds of nonnegative scalar curvature, see \cite[Lemma 2.8]{benatti_isoperimetricriemannianpenroseinequality_2022}, asserting, in particular, that if the manifold contains minimal surfaces, then one can find a minimal, outermost $\Sigma$ enclosing $\partial M$ and the analysis is applied on the new manifold with the minimal outermost boundary $\Sigma$. The masses are obviously the same as those of the original manifold. If $(M, g)$ possesses no closed minimal surfaces, and, in particular, it is boundaryless, then one can safely work in such complete manifold.

\begin{proposition}\label{prop:misomadm}
    Let $(M,g)$ be a complete $\CS^1_{\tau}$-asymptotically flat Riemannian $3$-manifold, for $\tau>1/2$, with nonnegative scalar curvature and (possibly empty) closed, minimal boundary. Assume that there exists $V_0 > 0$ such that, for any $V \geq V_0$, there exists an isoperimetric set $E_V$ of volume $V$ with connected boundary. Then,
\begin{equation}\label{eq:misomadm}
    \ma_{\iso} = \ma_{\ADM}.
\end{equation}
\end{proposition}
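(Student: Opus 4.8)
The plan is to prove separately the inequalities $\ma_{\ADM} \le \ma_{\iso}$ and $\ma_{\iso} \le \ma_{\ADM}$, the former being essentially available already. By the Remark preceding the statement, we may assume $(M,g)$ to be boundaryless or to carry a minimal \emph{outermost} boundary; in either case the known topological structure of these manifolds gives $H_2(M,\partial M,\Z) = \{0\}$, so that \cref{thm:properties-imcf}, \cref{thm:geroch} and \cref{thm:shi} are at hand. The inequality $\ma_{\ADM} \le \ma_{\iso}$ holds in the sharp $\CS^1_\tau$-Asymptotically Flat regime, $\tau > 1/2$, by the direct computation of \cite{fan_largespheresmallspherelimitsbrownyork_2009} recorded in \eqref{eq:madmleqmiso}, so only $\ma_{\iso} \le \ma_{\ADM}$ remains.

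\emph{Step 1: reduction to a Hawking-mass bound.} Arguing exactly as for \cref{lem:isoperimetrici-mass} --- relying on the relaxed description of $\ma_{\iso}$ of \cite[Proposition 37]{jauregui_lowersemicontinuitymass0_2019} (it suffices to test with families $(\Omega_j)$ having $\abs{\partial\Omega_j} \to +\infty$), together with the fact that $a \mapsto \tfrac{2}{a}\bigl(\abs{\Omega_j} - \tfrac{a^{3/2}}{6\sqrt\pi}\bigr)$ is decreasing --- one obtains
\[
\ma_{\iso} = \limsup_{V \to +\infty} \frac{2}{I(V)}\Bigl( V - \frac{I(V)^{3/2}}{6\sqrt\pi} \Bigr),
\]
where the isoperimetric profile $I$ is finite for $V \ge V_0$ and locally Lipschitz \cite{munozflores_localholdercontinuityisoperimetric_2019}, so $I'(V) = \H_{E_V}$ and $\int_{\partial E_V}\H^2 = I'(V)^2 I(V)$ for a.e.\ large $V$. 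Feeding this into the chain \eqref{eq:Cholimsupestimate} --- a de l'H\^opital-type inequality for the $\limsup$ plus the algebraic identity tying $I(V)$ and $I'(V)$ to $\ma_H(\partial E_V)$ --- leaves us with
\[
\ma_{\iso} \le \limsup_{V \to +\infty} \frac{32\pi\, \ma_H(\partial E_V)}{4\sqrt\pi\, I'(V)\sqrt{I(V)} + I'(V)^2 I(V)}.
\]
Since Shi's reverse Isoperimetric inequality \eqref{eq:shi} forces $I'(V)^2 I(V) = \int_{\partial E_V}\H^2 \le 16\pi$ for large $V$ (so that $\ma_H(\partial E_V) \ge 0$ and the denominator is $\le 32\pi$), the assertion will follow once we prove $\limsup_{V\to+\infty}\ma_H(\partial E_V) \le \ma_{\ADM}$: the $\ma_H(\partial E_V)$ will then be bounded, which pins the denominator to $32\pi$ along the relevant subsequence and yields $\ma_{\iso} \le \ma_{\ADM}$.

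\emph{Step 2: the Hawking bound via the weak IMCF.} This is where connectedness is used. Fix $V \ge V_0$. For $V$ large the isoperimetric set $E_V$ is a small perturbation of a large coordinate ball, hence outward minimising; in general we first pass to its strictly outward minimising hull, which for large $V$ has the same boundary area --- hence the same Hawking mass --- and, as $\partial E_V$ is connected and $H_2(M,\partial M,\Z) = \{0\}$, still has connected boundary. By \cref{thm:properties-imcf} (if $\partial M \neq \emptyset$, after the handlebody extension already exploited in the proof of \cref{thm:isoperimetric-existence}) there is a proper weak IMCF $(\Omega_t)_{t \ge 0}$ issuing from $E_V$ with $\partial\Omega_t$ connected for all $t$; since $(M,g)$ has nonnegative scalar curvature everywhere, \cref{thm:geroch} gives $\ma_H(\partial E_V) \le \lim_{t\to+\infty}\ma_H(\partial\Omega_t)$. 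Finally the asymptotic comparison of the Hawking mass with the $\ADM$ mass along a proper weak IMCF, established in the optimal $\CS^1_\tau$ regime in \cite{benatti_isoperimetricriemannianpenroseinequality_2022} (a key ingredient of the Penrose inequality proved there, refining \cite{huisken_inversemeancurvatureflow_2001}), gives $\lim_{t\to+\infty}\ma_H(\partial\Omega_t) \le \ma_{\ADM}$. Hence $\ma_H(\partial E_V) \le \ma_{\ADM}$ for every $V \ge V_0$, as needed; combined with Step 1 and $\ma_{\ADM} \le \ma_{\iso}$, this proves \eqref{eq:misomadm}.

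\emph{Expected main obstacle.} The crux is Step 2, i.e.\ the bound $\ma_H(\partial E_V) \le \ma_{\ADM}$. One must bridge the gap between the isoperimetric condition on $E_V$ and the outward-minimising property needed both to start the weak IMCF at $\partial E_V$ and to invoke Geroch's monotonicity, without losing connectedness; and, more seriously, one relies on the sharp asymptotic comparison $\lim_{t\to+\infty}\ma_H(\partial\Omega_t) \le \ma_{\ADM}$ in the optimal $\CS^1_\tau$ decay, which is the technically delicate part of \cite{benatti_isoperimetricriemannianpenroseinequality_2022}. A secondary point to be checked carefully is the de l'H\^opital step in \eqref{eq:Cholimsupestimate} and the convergence of the denominator to $32\pi$, equivalently $\int_{\partial E_V}\H^2 \to 16\pi$, which one extracts from Shi's inequality \eqref{eq:shi} together with the a posteriori boundedness of $\ma_H(\partial E_V)$.
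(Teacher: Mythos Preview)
Your approach is essentially the one in the paper: reduce to $\ma_{\iso}\le\ma_{\ADM}$ via \eqref{eq:Cholimsupestimate}, bound $\ma_H(\partial E_V)\le\ma_{\ADM}$ using connectedness together with the asymptotic comparison from \cite{benatti_isoperimetricriemannianpenroseinequality_2022}, and force the denominator to $32\pi$. Two points deserve correction.

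First, invoking Shi's inequality \eqref{eq:shi} to get $I'(V)^2 I(V)=\int_{\partial E_V}\H^2\le 16\pi$ is not legitimate: \eqref{eq:shi} is a reverse isoperimetric bound for IMCF sublevel sets issuing from a point, and it cannot be differentiated to yield a Willmore-type bound on isoperimetric boundaries. The paper does not attempt this. It first disposes of $\ma_{\ADM}=+\infty$ and of $\ma_{\iso}\le 0$ (the latter via the rigidity in the Isoperimetric Penrose inequality, giving $(M,g)\cong\R^3$), then restricts to a sequence $(V_k)$ realising the $\limsup$ in \eqref{eq:Cholimsupestimate}; positivity of $\ma_{\iso}$ and of the denominator force $\ma_H(\partial E_{V_k})\ge 0$ eventually, whence $I'(V_k)^2 I(V_k)\le 16\pi$ along that sequence only. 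The contradiction argument you sketch (using $\ma_H(\partial E_{V_k})\le\ma_{\ADM}<+\infty$ and $I(V_k)\to+\infty$) then upgrades this to $I'(V_k)^2 I(V_k)\to 16\pi$. Your ``Expected main obstacle'' paragraph shows you are aware of this mechanism; the body of Step~1 should be amended accordingly.

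Second, the remark that for large $V$ the set $E_V$ is ``a small perturbation of a large coordinate ball'' is precisely what is \emph{not} available in the $\CS^1_\tau$ regime --- Nerz's result requires $\CS^2_\tau$, as the paper itself stresses. This is harmless for your argument, since the fallback via the strictly outward minimising hull works; in fact the strict monotonicity of $I$ (\cref{prop:isomon}) already forces $E_V$ to coincide with its own hull, so no passage is needed.
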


\begin{proof}
As already pointed out, we only have to show $\ma_{\iso} \leq \ma_{\ADM}$. 
    Assume $\ma_{\ADM}<+\infty$ otherwise we are done, and $\ma_{\iso}>0$, otherwise \cite[Theorem 1.3]{benatti_isoperimetricriemannianpenroseinequality_2022} implies that $(M,g)$ is isometric to $\R^3$ and the result trivially holds. 

    Let $I$ be the isoperimetric profile of $(M, g)$, and $E_V$ an isoperimetric set of volume $V$, meaning that $I(V) = \abs{\partial E_V}$. Applying as above a version of de L'H\^opital rule, we get
    \begin{align}\label{eq:Cholimsupestimate}
\begin{split}
    \ma_{\iso}&= \limsup_{V \to +\infty} \ma_{\iso}(E_V) = \limsup_{V\to +\infty} \frac{2}{I(V)} \left(V - \frac{I(V)^\frac{3}{2}}{6 \sqrt{\pi}}  \right)\\&\leq \limsup_{V\to +\infty}\frac{2}{I'(V)}\left( 1 - \frac{\sqrt{I(V)} I'(V)}{4 \sqrt{\pi}} \right)\\
    & = \limsup_{V \to +\infty}\frac{ 2\sqrt{I(V)}\left(1 -\frac{I(V) I'(V)^2}{16 \pi}  \right)}{\sqrt{I(V)}I'(V)\left(1 + \frac{\sqrt{I(V)}I'(V)}{4\sqrt{\pi}}\right)}\\&= \limsup_{V\to +\infty} \frac{32 \pi\, \ma_{H}(\partial E_V)}{4 \sqrt{\pi}I'(V)\sqrt{I(V)} + I'(V)^2 I(V)},
    \end{split}
\end{align}
where $I(V)= \abs{ \partial E_V}$. 
If the Hawking mass of the isoperimetric sets of large volumes satisfies the sharp bound $\ma_H(\partial E_V) \leq \ma_{\ADM}$, \cref{eq:Cholimsupestimate} allows to conclude with the desired bound $\ma_{\iso} \leq \ma_{\ADM}$. 
    
    Let then $(V_k)_{k \in \N}$ be a sequence realising the superior limit in \cref{eq:Cholimsupestimate}. Since $\ma_{\iso}>0$, $\ma_H(\partial E_{V_k})\geq 0$ for large $k$. In particular, since $I'(V_k) = \mathrm{H}_k$ the constant value of the mean curvature of $\partial E_{V_k}$, we have
    $I'(V_{k})^2 I(V_{k})\leq 16 \pi$. Assume now by contradiction that there exists a (not relabeled) subsequence such that $I'(V_{k})^2 I(V_{k}) \leq 16 \pi -\varepsilon$ for some positive $\varepsilon>0$. Then, we would have
    \begin{equation}
    \label{eq:bound1}
        \frac{\varepsilon}{32 \pi} \sqrt{I(V_k)} \leq \ma_{H}(\partial E_{V_k}) \leq \ma_{\ADM},
    \end{equation}
    where the last inequality follows as in \cite[Theorem 1.1]{benatti_isoperimetricriemannianpenroseinequality_2022}, giving a contradiction. In particular, $I'(V_k)^2I(V_k)\to 16 \pi $ as $k \to +\infty$. Plugging this piece of information into \cref{eq:Cholimsupestimate}, we get 
    \begin{equation}
    \label{eq:bound2}
        \ma_{\iso} \leq \limsup_{k \to +\infty} \ma_{H}(\partial E_{V_k} ) \leq \ma_{\ADM},
    \end{equation}
    proving \cref{eq:misomadm}. We stress the fact that the connectedness of $\partial E_{V_k}$ is required in order to infer the rightmost bound in \cref{eq:bound1} and in \cref{eq:bound2}.
\end{proof}
The authors \cite{chodosh_isoperimetryscalarcurvaturemass_2021} can, indeed, count on connectedness, as they are working in  $\CS^2_1$-asymptotically flat manifolds, where isoperimetric sets of large volume are close to coordinate balls, as they show in \cite[Theorem 1.1]{chodosh_isoperimetryscalarcurvaturemass_2021} (actually valid in $\CS^2_\tau$-asymptotically flat manifolds with $\tau > 1/2$), that  importantly weakens the assumptions of the earlier works of Eichmair-Metzger \cite{eichmair_largeisoperimetricsurfacesinitial_2013, eichmair_uniqueisoperimetricfoliationsasymptotically_2013}, where the manifolds were assumed to be asymptotic to Schwarzschild manifolds. Moreover, in \cite{chodosh_isoperimetryscalarcurvaturemass_2021}, the authors  also rely on Huisken-Ilmanen's bound on the Hawking mass,  thus their analysis could be pushed to $\CS^2_\tau$-asymptotically flat manifolds, for $\tau > 1/2$, coupled with $\Ric \geq -\kst \abs{x}^{-2}$. 
Through the $\ADM$-Penrose inequality in optimal decay assumptions \cite[Theorem 1.1]{benatti_isoperimetricriemannianpenroseinequality_2022} the condition on the Ricci curvature can be dropped.
Actually, in \cite{chodosh_isoperimetryscalarcurvaturemass_2021}, the authors also take advantage of an \emph{a priori} knowledge of $I(V)$ and $I'(V)$ as $V \to +\infty$. The proof of \cref{prop:misomadm} shows that this is not needed.

\subsection{Nonlinear masses}
In this last section, we briefly introduce and discuss the nonlinear potential theoretic counterpart of the isoperimetric mass. To this end, we first introduce the following $p$-capacity of a compact set $K \in M$,
\begin{equation}
\label{p-cap}
\ncapa_p(K) = \inf \set{\frac{1}{4\pi}\left(\frac{p-1}{3-p}\right)^{p-1}\int_{M\smallsetminus K} \abs{\D v}^p \dif \mu\st v \in \CS_c^\infty(M),\, v \geq 1\text{ on } K}.
\end{equation}
As a consequence of the remarkable \cite[Theorem 3.6]{mari_flowlaplaceapproximationnew_2022}, $\CS^0$-asymptotically flat manifolds are $p$-nonparabolic, a condition consisting in the existence of a positive $p$-harmonic Green's function that in particular implies that the above quantity is positive for any $K = \partial \Omega$ of class $\CS^{1, \alpha}$, if $1 < p < 3$. Similarly to the isoperimetric mass, one can define the \emph{$p$-isocapacitary mass} as follows. 

\begin{definition}
Let $(M, g)$ be a Riemannian $3$-manifold possibly with boundary, with infinite volume. Then, its $p$-\emph{isocapacitary mass} is defined as
    \begin{equation}
    \label{eq:pisomass}
        \ma^{\tp}_{\iso}= \sup_{(\Omega_j)_{j \in \N}} \limsup_{j \to +\infty} \frac{1}{2p \pi \ncapa_p(\partial \Omega_j)^{\frac{2}{3-p}}}\left( \abs{\Omega_j} -\frac{4 \pi}{3} \ncapa_p (\partial \Omega_j)^\frac{3}{3-p}\right).
    \end{equation}
    where the supremum is taken among all exhaustions $(\Omega_j)_{j\in \N}$ consisting of domains with $\CS^{1,\alpha}$-boundary.
\end{definition}
The above capacitary notion of mass has recently been considered for $p = 2$ by Jauregui \cite{jauregui_admmasscapacityvolumedeficit_2020}, and in \cite{benatti_nonlinearisocapacitaryconceptsmass_2023} for $1 < p <3$. After the discussion in the last sections, one could naturally wonder about the relation of $\ma^{\tp}_{\iso}$ with the isoperimetric mass and the  ${\ADM}$ mass. A first answer in this direction is the following result, holding in the generality of $\CS^0$-asymptotically flat $3$-manifolds. 
\begin{proposition}[{\cite[Proposition 5.6]{benatti_nonlinearisocapacitaryconceptsmass_2023}}]
\label{prop:mp<miso}
Let $(M, g)$ be a $\CS^0$-asymptotically flat Riemannian $3$-manifold. Then,
\begin{equation}
\label{eq:mp<miso}
    \ma^{\tp}_{\iso} \leq \ma_{\iso}. 
\end{equation}
\end{proposition}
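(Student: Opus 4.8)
The plan is to test the definition of $\ma_{\iso}$ against the level sets of the $p$-capacitary potentials of the domains appearing in a generic exhaustion, and to compare the two normalized deficits through the same HĂ¶lder estimate already exploited for \cref{eq:stimashi}.

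Since $\ma^{\tp}_{\iso}$ is a supremum over exhaustions $(\Omega_j)_{j\in\N}$ by domains with $\CS^{1,\alpha}$-boundary, it suffices to fix one such exhaustion and prove that $\limsup_j q_p(\Omega_j)\le\ma_{\iso}$, where $q_p(\Omega)=\big(2p\pi\,\ncapa_p(\partial\Omega)^{2/(3-p)}\big)^{-1}\big(\abs{\Omega}-\tfrac{4\pi}{3}\,\ncapa_p(\partial\Omega)^{3/(3-p)}\big)$; one may assume $\ma_{\iso}<+\infty$, and along any exhaustion $\ncapa_p(\partial\Omega_j)\to+\infty$ and $\abs{\Omega_j}\to+\infty$. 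For each $j$ let $u_j$ be the $p$-capacitary potential of $\Omega_j$ — which exists and is proper by the $p$-nonparabolicity recalled after \cref{p-cap} — so that $\div(\abs{\D u_j}^{p-2}\D u_j)=0$ outside $\Omega_j$, $u_j\equiv1$ on $\partial\Omega_j$, $u_j\to0$ at infinity and $0<u_j<1$ outside $\Omega_j$. Put $\Omega_j^{(t)}=\{u_j\ge t\}$, $t\in(0,1]$. Two facts are key. First, since $\abs{\D u_j}^{p-2}\D u_j$ is divergence-free, the flux $\mathcal F_j=\int_{\{u_j=t\}}\abs{\D u_j}^{p-1}\dif\sigma$ is independent of $t$; as $u_j/t$ is the $p$-capacitary potential of $\Omega_j^{(t)}$, the coarea formula gives $\ncapa_p(\partial\Omega_j)=\tfrac1{4\pi}\big(\tfrac{p-1}{3-p}\big)^{p-1}\mathcal F_j$ together with the scaling $\ncapa_p(\partial\Omega_j^{(t)})=t^{1-p}\,\ncapa_p(\partial\Omega_j)$. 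Second, by Sard's theorem and the $\CS^{1,\alpha}$-regularity of $p$-harmonic functions, $\partial\Omega_j^{(t)}\in\CS^{1,\alpha}$ for a.e.\ $t$, and $\Omega_j^{(t)}\nearrow M$ as $t\downarrow0$; hence, writing $S_j=\sup\{q_{\iso}(E):E\supseteq\Omega_j\text{ bounded},\,\partial E\in\CS^{1,\alpha}\}$ with $q_{\iso}(E)=\tfrac{2}{\abs{\partial E}}\big(\abs{E}-\tfrac{\abs{\partial E}^{3/2}}{6\sqrt\pi}\big)$, the relaxed form of the definition of $\ma_{\iso}$ together with the isoperimetric inequality available in the asymptotically flat end (which forces $\abs{\partial E}\to+\infty$ along any such family) yields $\limsup_j S_j\le\ma_{\iso}$.

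The heart of the matter is the inequality $q_p(\Omega_j)\le(1+o(1))\,S_j$ as $j\to+\infty$. Writing $V_j(t)=\abs{\Omega_j^{(t)}}$, $A_j(t)=\abs{\partial\Omega_j^{(t)}}$, applying HĂ¶lder's inequality on $\{u_j=t\}$ with exponents $p$ and $p/(p-1)$ to $1=\abs{\D u_j}^{\frac{p-1}{p}}\abs{\D u_j}^{-\frac{p-1}{p}}$ — precisely as in the passage leading to \cref{eq:stimashi} — and combining with $-V_j'(t)=\int_{\{u_j=t\}}\abs{\D u_j}^{-1}\dif\sigma$, one obtains
\[
-\,V_j'(t)\ \ge\ \mathcal F_j^{-\frac{1}{p-1}}\,A_j(t)^{\frac{p}{p-1}}\qquad\text{for a.e.\ }t\in(0,1].
\]
Substituting the identity $A_j(t)=(36\pi)^{1/3}V_j(t)^{2/3}\,(1+\theta_j(t))^{-2/3}$ with $\theta_j(t)=3\sqrt\pi\,A_j(t)^{-1/2}q_{\iso}(\Omega_j^{(t)})>-1$, integrating over $t\in(0,1]$, and using the scaling of $\ncapa_p$ and the relation between $\mathcal F_j$ and $\ncapa_p(\partial\Omega_j)$: because the exponent $2p/(3(p-1))$ appearing here exactly offsets the power of $V_j$ produced by $V_j'$, the integration collapses to a comparison between $\abs{\Omega_j}$ and $\mathcal F_j$. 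Tracking all universal constants — a bookkeeping one may check on round Euclidean balls, where every inequality above degenerates to an equality and $q_p=q_{\iso}=0$ — and estimating the first-order contribution of $\theta_j$ by $q_{\iso}(\Omega_j^{(t)})\le S_j$ and by the decay of $A_j(t)^{-1/2}$, one is left precisely with $q_p(\Omega_j)\le(1+o(1))S_j$. Letting $j\to+\infty$ and recalling $\limsup_j S_j\le\ma_{\iso}$ concludes.

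The main obstacle is this last asymptotic reconciliation. One must integrate the differential inequality keeping the first-order contribution of $\theta_j$ — the term that carries $\ma_{\iso}$ — and to bound it one needs to control $A_j(t)^{-1/2}$, equivalently to compare the area of the (generally non-round, and not a priori outward-minimizing) level set $\{u_j=t\}$ with $\ncapa_p(\partial\Omega_j^{(t)})^{2/(3-p)}=t^{-2(p-1)/(3-p)}\ncapa_p(\partial\Omega_j)^{2/(3-p)}$, a reverse-isocapacitary-type estimate; moreover, since no nonnegative scalar curvature is assumed here, $q_{\iso}(\Omega_j^{(t)})$ need not be signed, so the negative part of $\theta_j$ has to be handled via the convexity of $\tau\mapsto(1+\tau)^{-2p/(3(p-1))}$. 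It is notable that, once the constants are tracked, the comparison closes with factor exactly $1$, as forced by the Euclidean model — this is what makes the resulting inequality between masses sharp on Schwarzschild. The remaining points — existence, properness and $\CS^{1,\alpha}$-regularity of $u_j$, and the measure-theoretic justification of the coarea identities for a.e.\ level set — are standard in nonlinear potential theory and are quoted from the references already cited.
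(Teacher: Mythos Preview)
Your scheme --- coarea formula plus H\"older on the level sets of the $p$-capacitary potential, then integration in $t$ --- is exactly the skeleton of the P\'olya--Szeg\H{o} argument that the paper invokes. The paper, however, packages it more efficiently: rather than fixing $\Omega_j$, looking at all its superlevel sets, and introducing the envelope $S_j$, it first extracts from the definition of $\ma_{\iso}$ the single \emph{global} asymptotic isoperimetric inequality
\[
(6\sqrt{\pi}\,\abs{\Omega})^{\frac{2p}{3}}\ \le\ \abs{\partial\Omega}^{p}+2p\sqrt{\pi}\,(\ma_{\iso}+o(1))\,\abs{\partial\Omega}^{\frac{2p-1}{2}},
\]
valid for all large $\Omega$, and then feeds this inequality into the classical P\'olya--Szeg\H{o} rearrangement. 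The first-order correction in $\ma_{\iso}$ is thus carried through the symmetrization in one stroke, and the comparison between $\ncapa_p(\partial\Omega)$ and $\abs{\Omega}$ drops out with the right constant; no separate control of $A_j(t)$ versus $\ncapa_p(\partial\Omega_j^{(t)})^{2/(3-p)}$ is needed.

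Your version, as you yourself flag, does not close. The passage you call ``bookkeeping'' is not bookkeeping: after substituting $A_j(t)=(36\pi)^{1/3}V_j(t)^{2/3}(1+\theta_j(t))^{-2/3}$ into $-V_j'\ge\mathcal F_j^{-1/(p-1)}A_j^{p/(p-1)}$ and integrating, the first-order term in $\theta_j$ carries the weight $A_j(t)^{-1/2}$, and to recover $q_p(\Omega_j)\le(1+o(1))S_j$ you must compare $\int_0^1 A_j(t)^{-1/2}(\cdots)\,dt$ with $\ncapa_p(\partial\Omega_j)^{-1/(3-p)}$. That is precisely the ``reverse-isocapacitary-type estimate'' you name as an obstacle, and nothing in your setup provides it --- there is no monotonicity of $A_j(t)$ along the $p$-capacitary level sets in a general $\CS^0$-Asymptotically Flat manifold, and no scalar curvature sign is available to help. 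The paper's organization sidesteps this entirely because the asymptotic isoperimetric inequality is applied \emph{before} the coarea integration, not after, so the $o(1)$ is uniform over all level sets and the integral closes by the standard Euclidean P\'olya--Szeg\H{o} computation. I would recommend rewriting along those lines: derive the displayed asymptotic isoperimetric inequality first (it is a two-line consequence of the definition of $\ma_{\iso}$), then run coarea/H\"older with it as a pointwise input.
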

Its proof is classical in nature and can be compared to the derivation of isocapacitary inequalities from the isoperimetric one (see \cite{mazya_sobolevspaces_2013} or, more directly, to the arguments in \cite{jauregui_capacityvolumeinequalitypoincare_2012} and in the proof of \cite[Theorem 4.1]{benatti_minkowskiinequalitycompleteriemannian_2022}). More precisely, it is based on the  P\'olya-Szeg\"o inequality applied to the sharp asymptotic isoperimetric inequality
\begin{equation}
        (6 \sqrt{\pi} \abs{\Omega})^{\frac{2p}{3}} \leq \abs{\partial \Omega}^{p}+ 2p\sqrt{\pi}(\ma_{\iso}+o(1))\abs{ \partial \Omega}^{\frac{2p-1}{2}} \qquad \left(\text{for }\ma_{\iso}>-\infty\right),
\end{equation}
that is actually a direct consequence of the definition of the isoperimetric mass (see \cite[Theorem 5.5]{benatti_nonlinearisocapacitaryconceptsmass_2023} for more details). 

The reverse inequality is obtained in \cite{benatti_nonlinearisocapacitaryconceptsmass_2023} in the setting of $\CS^1_\tau$-asymptotically flat Riemannian $3$-manifolds with $\tau > 1/2$, of nonnegative scalar curvature. As these assumptions may suggest, we strongly rely on the well-posedness of the $\ADM$ mass, appearing in an asymptotic computation along geodesic spheres, as in \cite{fan_largespheresmallspherelimitsbrownyork_2009}. The overall argument consists in a strengthening of the one proposed in the proof \cite[Theorem 5]{jauregui_admmasscapacityvolumedeficit_2020} and in the application of a generalization of a capacitary estimate of Bray-Miao \cite{bray_capacitysurfacesmanifoldsnonnegative_2008}, due to Xiao \cite{xiao_harmoniccapacityasymptoticallyflat_2016}. Such estimate reads  
\begin{equation}
\label{eq:bray-miao}
 \ncapa_p(\partial \Omega) \leq \left( \frac{\abs{\partial \Omega}}{4\pi}\right)^{\mathrlap{\frac{3-p}{2}}}\kern.2cm\leftidx{_2}{F}{_1}\left(\frac{1}{2}, \frac{3-p}{p-1}, \frac{2}{p-1}; 1- \frac{1}{16 \pi} \int_{\partial \Omega} \H^2 \dif \sigma\right)^{\mathrlap{-(p-1)}}\kern.7cm,
 \end{equation}
where $\kern.2cm\leftidx{_2}{F}{_1}$ denotes the hypergeometric function, see \cite{weisstein_hypergeometricfunction_}. The only needed propertis of such implicit function $\kern.2cm\leftidx{_2}{F}{_1}$ will be recalled below.  Exploiting \cref{eq:identity-masses}, we conclude in \cite[Theorem 1.3]{benatti_nonlinearisocapacitaryconceptsmass_2023} that in $\CS^1_\tau$-asymptotically flat $3$-manifolds with nonnegative scalar curvature and closed minimal boundary there holds
\begin{equation}
\label{eq:identity-pmasses}
    \ma^{\tp}_{\iso} = \ma_{\iso} = \ma_{\ADM}
\end{equation}
for any $1 < p \leq 2$. This was already known for $p = 2$ only,  under the stronger assumption of harmonic flatness at infinity \cite[Corollary 8]{jauregui_admmasscapacityvolumedeficit_2020}, meaning that $g = u^4 \delta$ outside a suitable compact set, where $u$ is harmonic with respect to the Euclidean Laplacian. 

\smallskip

Here, we observe that the same assumption of connectedness of isoperimetric sets considered in \cref{prop:misomadm} allows to get the identity $\ma^{\tp}_{\iso} = \ma_{\iso}$, for $1 < p \leq 2$, in $\CS^0$-asymptotically flat manifolds with nonnegative scalar curvature and minimal boundary. This is particularly interesting due to the fact that, in this asymptotic regime, the notion of $\ma_{\ADM}$ is not a priori available, so one cannot go through computations involving its expression. 

\begin{proposition}
Let $(M,g)$ be a complete $\CS^0$-asymptotically flat Riemannian $3$-manifold with nonnegative scalar curvature and (possibly empty) closed, minimal boundary. Assume that there exists $V_0 > 0$ such that, for any $V \geq V_0$, there exists an isoperimetric set $E_V$ of volume $V$ with connected boundary.  Then,
\begin{equation}
\label{eq:equivalencepmasses}
    \ma_{\iso} = \ma^{\tp}_{\iso}
\end{equation}
for every $ 1<p\leq 2 $.
\end{proposition}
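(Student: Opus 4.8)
\emph{Proof plan.} Since $\ma^{\tp}_{\iso}\le\ma_{\iso}$ always holds by \cref{prop:mp<miso}, only the reverse inequality $\ma_{\iso}\le\ma^{\tp}_{\iso}$ must be proved, and we may assume $0<\ma_{\iso}<+\infty$, the extremal cases being immediate or handled as in \cref{prop:misomadm}. The idea is to run the argument of \cref{prop:misomadm} with the (here unavailable) $\ma_{\ADM}$ replaced by the asymptotic comparison \cref{eq:asycomp} and the Bray--Miao estimate \cref{eq:bray-miao}. Write $I(V)=\abs{\partial E_V}$. For $V\ge V_0$ the boundary $\partial E_V$ is connected, with two consequences. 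First, running the weak IMCF issuing from $\partial E_V$ — which stays connected since $H_2(M,\partial M,\Z)=\{0\}$ — and invoking \cref{eq:asycomp} gives $\ma_{H}(\partial E_V)\le\ma_{\iso}$; as in \cref{prop:misomadm}, together with $I(V)\to+\infty$ this forces $\int_{\partial E_V}\H^2\dif\sigma\to 16\pi$, i.e.\ $z_V:=1-\tfrac1{16\pi}\int_{\partial E_V}\H^2\dif\sigma=\tfrac{4\sqrt\pi\,\ma_H(\partial E_V)}{\sqrt{I(V)}}\to 0$. Second, the Bray--Miao inequality \cref{eq:bray-miao} applies and reads
\begin{equation}
\ncapa_p(\partial E_V)\le\Big(\frac{I(V)}{4\pi}\Big)^{\frac{3-p}{2}}\leftidx{_2}{F}{_1}\Big(\tfrac12,\tfrac{3-p}{p-1},\tfrac2{p-1};z_V\Big)^{-(p-1)}.
\end{equation}

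Next I would test the definition of $\ma^{\tp}_{\iso}$ on the family $(E_V)_{V\ge V_0}$, which is legitimate after relaxing the defining supremum to sequences with $\ncapa_p(\partial\Omega_j)\to+\infty$ in place of exhaustions — permissible in the $\CS^0$-Asymptotically Flat setting exactly as in \cite{benatti_nonlinearisocapacitaryconceptsmass_2023}. Since $c\mapsto\frac1{2p\pi c^{2/(3-p)}}\big(V-\frac{4\pi}3c^{3/(3-p)}\big)$ is decreasing, the upper bound on $\ncapa_p(\partial E_V)$ turns into a lower bound on the quotient; plugging it in, using $\frac{4\pi}{3}\big(\frac{I(V)}{4\pi}\big)^{3/2}=\frac{I(V)^{3/2}}{6\sqrt\pi}$ and the only property of the hypergeometric function that is needed, namely $\leftidx{_2}{F}{_1}\big(\tfrac12,\tfrac{3-p}{p-1},\tfrac2{p-1};z\big)=1+\tfrac{3-p}4 z+O(z^2)$ with $z=z_V\to0$, the leading order collapses and leaves
\begin{equation}
\ma^{\tp}_{\iso}\ \ge\ \limsup_{V\to+\infty}\frac1p\Big(\ma_{\iso}(E_V)+(p-1)\,\ma_H(\partial E_V)\Big),
\end{equation}
with $\ma_{\iso}(E_V)=\frac{2}{I(V)}\big(V-\frac{I(V)^{3/2}}{6\sqrt\pi}\big)$.

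It remains to identify the right-hand side with $\ma_{\iso}$, by a change of variables and a final de l'H\^opital step. Set $u(V)=\frac{I(V)^{3/2}}{6\sqrt\pi}$; by \cref{prop:isomon} this is a strictly increasing bijection of $[V_0,+\infty)$ onto $[u(V_0),+\infty)$, with inverse $V(u)$, and put $w(u)=\ma_{\iso}(E_{V(u)})=\frac{2(V(u)-u)}{(6\sqrt\pi u)^{2/3}}$. Writing $\psi(u)=V(u)-u$, so that $u'(V(u))=(1+\psi'(u))^{-1}$ and $\int_{\partial E_V}\H^2\dif\sigma=16\pi\,u'(V)^2$, and using $\int_{\partial E_V}\H^2\dif\sigma\to16\pi$ (hence $\psi'\to0$), a direct computation gives
\begin{equation}
\ma_H(\partial E_{V(u)})=\frac{(6\sqrt\pi u)^{1/3}}{4\sqrt\pi}\big(1-u'(V(u))^2\big)=\big(w(u)+\tfrac32\,u\,w'(u)\big)\big(1+o(1)\big),
\end{equation}
so that $\frac1p\big(\ma_{\iso}(E_{V(u)})+(p-1)\ma_H(\partial E_{V(u)})\big)=\big(w(u)+c\,u\,w'(u)\big)+o(1)$ with $c=\frac{3(p-1)}{2p}\in(0,\tfrac34]$. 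Since $\limsup_{u}w(u)=\limsup_{V}\ma_{\iso}(E_V)=\ma_{\iso}$ by \cref{lem:isoperimetrici-mass} (valid here thanks to the standing existence assumption) and $u\mapsto u^{1/c}$ is increasing to $+\infty$, de l'H\^opital yields
\begin{equation}
\ma_{\iso}=\limsup_{u\to+\infty}w(u)\le\limsup_{u\to+\infty}\frac{\big(u^{1/c}w(u)\big)'}{\big(u^{1/c}\big)'}=\limsup_{u\to+\infty}\big(w(u)+c\,u\,w'(u)\big),
\end{equation}
and chaining this with the two previous displays gives $\ma^{\tp}_{\iso}\ge\ma_{\iso}$, as wanted.

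The main obstacle is the bookkeeping in the two asymptotic expansions: one must control the error terms in $\leftidx{_2}{F}{_1}(\tfrac12,\cdot,\cdot;z)=1+\tfrac{3-p}4z+O(z^2)$ and in $1-u'(V(u))^2=2\psi'(u)(1+o(1))$ precisely enough that they stay negligible after division by $I(V)$, and — crucially — check that the change of variables reproduces \emph{exactly} the exponent $1/c$, $c=\frac{3(p-1)}{2p}$, which makes the concluding de l'H\^opital close. This is compounded by the low regularity of the isoperimetric profile $I$, which forces one to argue with one-sided/Dini derivatives (or with a smooth approximation, as in \cite{benatti_isoperimetricriemannianpenroseinequality_2022, chodosh_isoperimetryscalarcurvaturemass_2021}), and by the need to guarantee $\int_{\partial E_V}\H^2\dif\sigma\to16\pi$ along the whole range $V\to+\infty$ rather than just along a subsequence, exactly the delicate point already faced in the proof of \cref{prop:misomadm}.
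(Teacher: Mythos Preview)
Your plan diverges from the paper at the key step, and the divergence is exactly where a genuine gap appears. You apply the Bray--Miao estimate \cref{eq:bray-miao} directly to $\partial E_V$, but as stated that estimate requires $\partial\Omega$ to be connected \emph{and homologous to $\partial M$}. The hypothesis of the Proposition gives only connectedness; nothing prevents $E_V$ from being a large region in the asymptotic end disjoint from $\partial M$, in which case $\partial E_V$ is null-homologous and \cref{eq:bray-miao} does not apply. The paper circumvents this by \emph{not} applying Bray--Miao to $E_V$: it first evolves each $E_{V_k}$ by the weak IMCF (with jumps, if the flow meets $\partial M$) and only then applies \cref{eq:bray-miao} to the level sets $E_{V_k}^t$ for $t$ large, which automatically have connected boundary homologous to $\partial M$.

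This detour through IMCF also lets the paper avoid your final de~l'H\^opital step entirely. By choosing $t_j$ appropriately, the paper produces an exhaustion $\Omega_j=E_{V_{k_j}}^{t_j}$ on which \emph{both} $\ma_H(\partial\Omega_j)$ and $\ma_{\iso}(\Omega_j)$ converge to $\ma_{\iso}$; the Taylor expansion of $\leftidx{_2}{F}{_1}$ in \cref{eq:bray-miao}, combined with the equivalent formulation of $\ma^{\tp}_{\iso}$, then yields $\ma^{\tp}_{\iso}\ge\frac{p-1}{p}\ma_{\iso}+\frac{1}{p}\ma_{\iso}$ without further work. Your route, by contrast, needs $\int_{\partial E_V}\H^2\dif\sigma\to16\pi$ and differentiability of $I$ along the \emph{whole} range $V\to+\infty$, issues you correctly flag as delicate. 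A smaller slip: the right reduction is to assume $\ma^{\tp}_{\iso}<+\infty$ (and then derive $\ma_{\iso}\le\ma^{\tp}_{\iso}$), not $\ma_{\iso}<+\infty$; your phrasing leaves the case $\ma_{\iso}=+\infty$, $\ma^{\tp}_{\iso}<+\infty$ formally open.
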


\begin{proof} 
The inequality $\ma_{\iso} \geq \ma^{\tp}_{\iso}$ has already been pointed out to hold more in general, so we focus on the reverse one $\ma_{\iso} \leq \ma^{\tp}_{\iso}$.

\smallskip

For every $V>0$ large enough, denote $E_V$ the isoperimetric set of volume $V$. Assume that $\ma^{\tp}_{\iso} <+ \infty$ and that that $\ma_{\iso}>0$, otherwise \cite[Theorem 1.3]{benatti_isoperimetricriemannianpenroseinequality_2022} implies that $(M,g)$ is isometric to $\R^3$ and the conclusion trivially holds. Along a sequence $(V_k)_{k \in \N}$ realising the superior limit in \cref{eq:Cholimsupestimate}, there holds $\ma_{H}(\partial E_{V_k})\geq 0$. Evolve $E_{V_k}$ using the weak IMCF  and denote $E_{V_k}^t$ its sublevels. We briefly point out that, since the isoperimetric sets are not known to be homologous to the boundary, the evolving hypersurfaces may in principle touch the boundary $\partial M$. If this happens, one should consider the weak IMCF \emph{with jumps}, that is, its modification described in \cite[Section 6]{huisken_inversemeancurvatureflow_2001}.  By \cref{eq:asycomp} and the Geroch monotonicity formula, \cref{thm:geroch}, we have
\begin{equation}\label{eq:hawking_estimate}
    \ma_H(\partial E_{V_k}) \leq \lim_{t \to +\infty} \ma_H(\partial E_{V_k}^t)\leq \ma_{\iso}.
\end{equation}
Observe now that $\ma_H(\partial E_{V_k}^t)\geq 0$ and, for large $t$, the set $E_{V_k}^t$ has a connected boundary which is homologous to $\partial M$. Applying \cref{eq:bray-miao}, we have
    \begin{align}
        \ncapa_p(\partial E^t_{V_k}) &\leq \left( \frac{\abs{\partial E^t_{V_k}}}{4\pi}\right)^{\mathrlap{\frac{3-p}{2}}}\kern.2cm\leftidx{_2}{F}{_1}\left(\frac{1}{2}, \frac{3-p}{p-1}, \frac{2}{p-1}; 1- \frac{1}{16 \pi} \int_{\partial E^t_{V_k}} \H^2 \dif \sigma\right)^{\mathrlap{-(p-1)}}\kern.7cm\leq \left( \frac{\abs{\partial E^t_{V_k}}}{4\pi}\right)^{\mathrlap{\frac{3-p}{2}}},
    \end{align}
since $\leftidx{_2}{F}{_1}\left(\frac{1}{2}, \frac{3-p}{p-1}, \frac{2}{p-1}; t\right) \geq 1$ if $0\leq t \leq 1$.
In particular, this implies
\begin{equation}\label{eq:bound}
    \liminf_{t \to +\infty} \ma_{\iso}(E_{V_k}^t)\leq \liminf_{t \to +\infty}p \frac{1}{2p \pi \ncapa_p(\partial E_{V_k}^t)^{\frac{2}{3-p}}}\left( \abs{E_{V_k}^t} -\frac{4 \pi}{3} \ncapa_p (\partial E_{V_k}^t)^\frac{3}{3-p}\right) \leq p\ma^{\tp}_{\iso}.
\end{equation}
Combining \cref{eq:hawking_estimate,eq:bound}, we get that $\ma_H(\partial E_{V_k})\leq \ma_{\iso}^{\tp}<+\infty$.

Suppose now that, up to a subsequence,  $I'(V_k)^2 I(V_k) \leq 16 \pi-\varepsilon$ for some positive $\varepsilon>0$ and every $k\in \N$. Then we would have
\begin{equation}
     \ma_H(\partial E_{V_k}) \geq \frac{\varepsilon}{32 \pi }\sqrt{I(V_k)},
\end{equation}
that diverges, since the isoperimetric constant is positive, contradicting $\ma^{\tp}_{\iso}<+\infty$. Then,
\begin{equation}
   \ma_{\iso}\leq \limsup_{k\to +\infty} \frac{32 \pi\, \ma_{H}(\partial E_{V_k})}{4 \sqrt{\pi}I'(V_k)\sqrt{I(V_k)} + I'(V_k)^2 I(V_k)} =\limsup_{k\to +\infty} \ma_{H}(\partial E_{V_k})\leq \ma_{\iso}.
\end{equation}
For every $j\geq0$ there exists $E_{V_{k_j}}$ such that
\begin{equation}
    \ma_{\iso}-\frac{1}{j} \leq \lim_{t \to +\infty} \ma_{H}(\partial E_{V_{k_j}}^t) \leq \liminf_{t \to +\infty} \frac{2}{\abs{E^t_{V_k}}}\left( \abs{E^t_{V_k}} -\frac{\abs{\partial E^t_{V_k}}^{\frac{3}{2}}}{6 \sqrt{\pi}}\right) \leq \ma_{\iso}.
\end{equation}
Therefore, there exists an exhaustion made of subsets $\Omega_j= E_{V_{k_j}}^{t_j}$ of $M$ that are bounded  with connected boundaries homologous to $\partial M$, and such that
\begin{align}
    \ma_{\iso}- \frac{1}{j} \leq \ma_{H}(\partial \Omega_j) \leq \ma_{\iso} &&\ma_{\iso}- \frac{1}{j} \leq \ma_{\iso}(\Omega_j) \leq \ma_{\iso}.
\end{align}
In particular, we have
\begin{equation}
    1- \frac{1}{16 \pi}\int_{\partial \Omega_j} \H^2 \dif \sigma = \frac{4\ma_{\iso}\sqrt{\pi}}{\sqrt{ \abs{ \partial \Omega_j}}} (1+o(1)).
\end{equation}
Appealing again to \cref{eq:bray-miao} and using Taylor's expansion of $\leftidx{_2}{F}{_1}$ around $0$, we get
\begin{align}
\label{eq:bray-miao2}
\ncapa_p(\partial\Omega_j) &\leq \left( \frac{ \abs{ \partial \Omega_j}}{4\pi}\right)^{\frac{3-p}{2}}\leftidx{_2}{F}{_1}\left(\frac{1}{2}, \frac{3-p}{p-1}, \frac{2}{p-1}; \frac{4\ma_{\iso}\sqrt{\pi}}{\sqrt{ \abs{ \partial \Omega_j}}} (1+o(1))\right)^{-(p-1)} \\
&\leq \left( \frac{ \abs{ \partial \Omega_j}}{4\pi}\right)^{\frac{3-p}{2}}\left(1+ \frac{(3-p)\sqrt{\pi}}{\sqrt{ \abs{ \partial \Omega_j}}} \ma_{\iso} (1+o(1)) \right)^{-(p-1)} \\
& =\left( \frac{ \abs{ \partial \Omega_j}}{4\pi}\right)^{\frac{3-p}{2}}\left(1- \frac{(p-1)(3-p)\sqrt{\pi}}{\sqrt{ \abs{ \partial \Omega_j}}} \ma_{\iso} (1+o(1)) \right).
\end{align}
We point out now that the $p$-isocapacitary mass can indeed be computed through the equivalent formulation \cite[Proposition 5.2]{benatti_nonlinearisocapacitaryconceptsmass_2023}
\begin{equation}
\ma^{\tp}_{\iso}= \sup_{(\Omega_j)_{j \in \N}} \limsup_{j \to +\infty} \frac{2\ncapa_p(\partial \Omega)^{\frac{p-2}{3-p}}}{p(3-p)}  \left( \left(\frac{3\abs{ \Omega_j}}{4\pi}\right)^{\frac{3-p}{3}} - \ncapa_p(\partial \Omega_j)\right).
\end{equation}
Thus, \cref{eq:bray-miao2} implies
\begin{align}
    \ma^{\tp}_{\iso}& \geq \limsup_{j \to +\infty} \ma^{\tp}_{\iso}(\Omega_j) \geq \limsup_{j \to +\infty}\frac{2\ncapa_p(\partial \Omega)^{\frac{p-2}{3-p}}}{p(3-p)}  \left( \left(\frac{3\abs{ \Omega_j}}{4\pi}\right)^{\frac{3-p}{3}} - \ncapa_p(\partial \Omega_j)\right)\\
    &\geq \frac{p-1}{p} \ma_{\iso}+\limsup_{j \to +\infty}\frac{2\abs{\partial \Omega_j}^{\frac{p-2}{2}}}{p(3-p)(4\pi)^{\frac{p-2}{2}}}  \left( \left(\frac{3\abs{ \Omega_j}}{4\pi}\right)^{\frac{3-p}{3}} - \left( \frac{ \abs{ \partial \Omega_j}}{4\pi}\right)^{\frac{3-p}{2}}\right)\\
    & = \frac{p-1}{p} \ma_{\iso}+\limsup_{j\to +\infty}\frac{1}{p} \ma_{\iso}(\Omega_j) = \ma_{\iso},
\end{align}
completing the proof.
\end{proof}
For the same reasons pointed out after the proof of \cref{eq:misomadm}, the above argument provides an actual proof of \cref{eq:equivalencepmasses} for $\CS^{2}_{\tau}$-asymptotically flat manifolds with nonnegative scalar curvature and minimal boundary, when $\tau > 1/2$.
\section{Questions and open problems}
\label{sec:questions}
In this last section, we collect some natural problems and questions connected with the previous topics.
\begin{enumerate}
\item \emph{Connectedness of isoperimetric sets.} As showed above, knowing that the isoperimetric sets of big volume have connected boundaries allows us to set the equivalence among isocapacitary, isoperimetric, and $\ADM$ masses. It would be thus desirable to know if such property is true at least for $\CS^{1}_\tau$-asymptotically flat $3$-manifolds with nonnegative scalar curvature, with $\tau > 1/2$. It is likely that, suitably reworking the computations of Nerz \cite{nerz_foliationsstablespheresconstant_2015} without taking into account the behaviour of second derivatives of the metric, this can be accomplished. Some of the insights contained in \cite{benatti_isoperimetricriemannianpenroseinequality_2022} about the asymptotic behaviour of the $2$-Hawking mass and its relation with the Hawking mass, in this optimal asymptotic regime, could play a role. 
    \item \emph{Higher dimensional analysis}. All the results presented here are proved through computations that are very peculiar to dimension $3$. They are all substantially based on the application of the Gauss-Bonnet Theorem \cref{eq:gauss-bonnet} in the monotonicity calculation performed in \cref{subsec:hawking}. On the other hand, the fundamental Positive Mass Theorem has been proved through Schoen-Yau's \cite{schoen_proofpositivemassconjecture_1979}  contradiction argument up to dimension $7$ (see \cite{schoen_variationaltheorytotalscalar_1989}), and consequently also Bray's approach to the Penrose inequality \cite{bray_riemannianpenroseinequalitydimensions_2009}. It would be very interesting to understand whether, possibly with related arguments, the existence \cref{thm:isoperimetric-existence} and the various results on the isoperimetric/isocapacitary masses hold in higher dimensions.
\item \emph{A $\CS^0$-notion of $\ADM$ mass.} A weakened notion of $\ADM$ mass, resulting well posed, in particular, on $3$-manifolds that are $\CS^{0}_\tau$-asymptotically flat with $\tau > 1/2$, and that act as initial datum for a Ricci flow of metrics with nonnegative scalar curvature, has been recently devised by Burkhardt-Guim \cite{burkhardt-guim_admmassmetricsdistortion_2022}. It would be interesting to check if such quantity still coincides with the isoperimetric mass. The existence of isoperimetric sets in this class of potentially nonsmooth metrics would be of interest too. 
\item \emph{A conjecture of Huisken.} Strongly related to the previous point, we mention a famous and formidable conjecture by Huisken (see e.g. \cite[p. 2221-2223]{cederbaum_mathematicalaspectsgeneralrelativity_2021}) on an isoperimetric Positive Mass Theorem on $\CS^0$ manifolds of dimension $3$ admitting some suitable notion of nonnegative scalar curvature. We believe that the asymptotic comparison between the Hawking mass and the isoperimetric mass devised in \cite{benatti_isoperimetricriemannianpenroseinequality_2022} and discussed here may serve as a useful tool, as it strongly weakens the regularity requirements at least on the asymptotic decay of the metric. Manifolds of nonnegative scalar curvature in the  Ricci-flow-related sense considered by Burkhardt-Guim \cite{burkhardt-guim_admmassmetricsdistortion_2022}, might be a good family of metrics to test Huisken's conjecture on. 
 \end{enumerate}
\begingroup
\setlength{\emergencystretch}{1em}
\printbibliography
\endgroup

\end{document}